\documentclass[12pt]{amsart}

% packages that are used in this project
\usepackage[lmargin=1in,rmargin=1in,tmargin=1in,bmargin=1in]{geometry}
\RequirePackage{amsmath} 
\RequirePackage{amssymb}
\usepackage{amscd,latexsym,amsthm,amsfonts,amssymb,amsmath,amsxtra}
\usepackage[colorlinks=true,urlcolor=blue,citecolor=blue]{hyperref}
\usepackage{color}
\usepackage[all]{xy}
\usepackage[OT2,T1]{fontenc}
\usepackage{bm}
\usepackage{mathtools}
\usepackage{mathrsfs}
\usepackage{xcolor}
\usepackage{comment}
\usepackage{marginnote}
\usepackage{enumitem}
\usepackage{thmtools, thm-restate}

% short notations that are used in this project
\newcommand{\cO}{\mathcal{O}}

\newcommand{\bA}{\mathbb{A}}
\newcommand{\bR}{\mathbb{R}}
\newcommand{\Ad}{\text{Ad}}

\newcommand{\bQ}{\mathbb{Q}}
\newcommand{\Sym}{\text{Sym}}

\newcommand{\GL}{\text{GL}}

\newcommand{\cA}{\mathcal{A}}
\newcommand{\ov}{\overline}

% theorem enviroment

\newtheorem{lemma}{Lemma}[section]
\newtheorem{remark}{Remark}[section]

\newtheorem{theorem}{Theorem}[section]

\newtheorem{thmx}{Theorem}

% title and author of this paper
\title{Landau-Siegel Zeros of Triple Product L-functions}
\author{Shifan Zhao}

\begin{document}

\begin{abstract}
    Let $F$ be a number field. Let $\pi_1,\pi_2$ be cuspidal automorphic representations of $\GL_2(\bA_F)$, and let $\pi$ be a cuspidal automorphic representation of either $\GL_2(\bA_F)$ or $\GL_3(\bA_F)$. When $(\pi_1,\pi_2,\pi)$ is of general type, we show that the triple product $L$-function $L(s,\pi_1 \times \pi_2 \times \pi)$ on either $\GL(2) \times \GL(2) \times \GL(2)$ or $\GL(2) \times \GL(2) \times \GL(3)$ has a standard zero-free region with no exceptional Landau-Siegel zero. Moreover, when $(\pi_1,\pi_2,\pi)$ is not of general type, we give precise conditions when $L(s,\pi_1 \times \pi_2 \times \pi)$ could possibly have exceptional Landau-Siegel zeros. 
\end{abstract}

\maketitle

\tableofcontents

\section{Introduction}

\subsection{Background}
Let $F$ be a number field. Let $\pi$ be a cuspidal automorphic representation of $\GL_n(\bA_F)$, $n \geq 1$. Denote by $L(s,\pi)$ the standard $L$-function associated to $\pi$. Let $c>0$ be a positive number. We define a \textit{Landau-Siegel zero} of $L(s,\pi)$, relative to $c$, to be a real zero $\beta$ of $L(s,\pi)$ satisfying
\begin{equation*}
    1-\frac{c}{\log C(\pi)} < \beta < 1.
\end{equation*}
Here $C(\pi)$ denotes the analytic conductor of $L(s,\pi)$.

Studying Landau-Siegel zeros of automorphic $L$-functions is a central topic in analytic number theory. For example, when $F = \bQ$ and $\pi = \chi$ is a real primitive Dirichlet character, the possibility of existence of a Landau-Siegel zero of the Dirichlet $L$-function $L(s,\chi)$ is a long-standing open problem. Knowing the nonexistence or the existence of such a zero, on both sides, has deep consequences in other important arithmetic problems such as the distribution of prime numbers in arithmetic progressions \cite{Davenportbook}, the class number problem \cite{Davenportbook}, the twin prime conjecture \cite{Heath-Brown1983}, and the non-vanishing of $L$-functions on $\GL(2)$ \cite{IwaniecSarnak2000}, to name a few.

Although the problem of Landau-Siegel zeros of Dirichlet $L$-functions is extremely hard, eliminating Landau-Siegel zeros of higher degree automorphic $L$-functions is surprisingly more approachable. In the appendix of the ground-breaking paper \cite{HoffsteinLockhart1994}, Goldfeld, Hoffstein and Lieman proved that if $\pi$ is a non-dihedral cuspidal representation of $\GL_2(\bA_\bQ)$, then $L(s,\Ad(\pi))$ has no Landau-Siegel zero. Here $\Ad(\pi)$ denotes the adjoint square lift of $\pi$, which is a self-dual cuspidal representation of $\GL_3(\bA_\bQ)$ studied in \cite{GelbartJacquet1978}. Inspired by their work, Landau-Siegel zeros of standard $L$-functions $L(s,\pi)$ were extensively studied later in several papers including \cite{HoffsteinRamakrishnan1995}\cite{Banks1997}\cite{Thorner2021}\cite{Luo2023}. We will review some of these results in more details in Section \ref{Landau-Siegel zero known cases}.

Let $\pi_1$ and $\pi_2$ be cuspidal automorphic representations of $\GL_n(\bA_F)$ and $\GL_m(\bA_F)$, respectively. We similarly define a \textit{Landau-Siegel zero} of the Rankin-Selberg $L$-function $L(s,\pi_1 \times \pi_2)$, relative to a positive constant $c>0$, to be a real zero $\beta$ of $L(s,\pi_1 \times \pi_2)$ satisfying 
\begin{equation*}
    1-\frac{c}{\log (C(\pi_1)C(\pi_2))} < \beta < 1.
\end{equation*}

Landau-Siegel zeros of Rankin-Selberg $L$-functions were first studied by Ramakrishnan and Wang \cite{RamakrishnanWang2003}. They proved that for cuspidal representations $\pi_1$ and $\pi_2$ of $\GL_2(\bA_F)$, the Rankin-Selberg $L$-function $L(s,\pi_1 \times \pi_2)$ can only possibly have a Landau-Siegel zero in a few special cases. In particular, when the pair $(\pi_1,\pi_2)$ is of \textit{general type}, i.e., when $\pi_1$ and $\pi_2$ are non-dihedral and are not twist-equivalent, $L(s,\pi_1 \times \pi_2)$ has no Landau-Siegel zero. Landau-Siegel zeros of other types of Rankin-Selberg $L$-functions were studied in \cite{Luo2023}\cite{Thorner2025symmetricsquare}\cite{Thorner2025}. We will also review some of these results in more details in Section \ref{Landau-Siegel zero known cases}.

\subsection{Main Results}
In this paper we study Landau-Siegel zeros of \textit{triple product} $L$-functions on $\GL(2) \times \GL(2) \times \GL(2)$ and on $\GL(2) \times \GL(2) \times \GL(3)$.

Let $\pi_1,\pi_2$ and $\pi_3$ be cuspidal automorphic representations of $\GL_2(\bA_F)$. The triple product $L$-function $L(s,\pi_1 \times \pi_2 \times \pi_3)$ was first studied in \cite{Garrett1987} when $F = \bQ$ and $\pi_1,\pi_2,\pi_3$ associate to holomorphic cusp forms. The construction was then generalized to any number field $F$ in \cite{P-SRallis1987}. Moreover, in view of the work \cite{Ramakrishnan2000} on the modularity of Rankin-Selberg product on $\GL(2) \times \GL(2)$, the triple product $L$-function $L(s,\pi_1 \times \pi_2 \times \pi_3)$ can also be viewed as a Rankin-Selberg $L$-function on $\GL(4) \times \GL(2)$:
\begin{equation*}
    L(s,\pi_1 \times \pi_2 \times \pi_3) = L(s,(\pi_1 \boxtimes \pi_2) \times \pi_3),
\end{equation*}
whose analytic properties are well-known by \cite{JacquetP-SShalika1983}.

We say the triple $(\pi_1,\pi_2,\pi_3)$ is of 
\begin{itemize}
    \item \textit{general type}, if $\pi_1,\pi_2,\pi_3$ are all non-dihedral, and $\pi_i$ is not twist-equivalent to $\pi_j$ for all $i \neq j$.
    \item \textit{twist-equivalent type}, if $\pi_1,\pi_2,\pi_3$ are all non-dihedral, but $\pi_i$ is twist-equivalent to $\pi_j$ for some $i \neq j$.
    \item \textit{dihedral type}, if at least one of $\pi_1,\pi_2,\pi_3$ is dihedral. 
\end{itemize}

With these terminologies we are now ready to state our first main result. 
\begin{thmx}\label{Theorem A}
    There exists an absolute effective constant $c>0$, such that the triple product $L$-function $L(s,\pi_1 \times \pi_2 \times \pi_3)$ has no Landau-Siegel zero relative to $c$, except possibly in the following cases:
    \begin{enumerate}
        \item $(\pi_1,\pi_2,\pi_3)$ is of dihedral type.
        \item $(\pi_1,\pi_2,\pi_3)$ is of twist-equivalent type, and moreover $\pi_1,\pi_2,\pi_3$ are all twist-equivalent to each other.
    \end{enumerate}
\end{thmx}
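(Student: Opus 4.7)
The plan is to adapt the Hoffstein--Lockhart--Lieman positivity method used in \cite{HoffsteinLockhart1994,RamakrishnanWang2003}, applying it to an isobaric auxiliary Rankin--Selberg $L$-function whose pole order at $s=1$ matches the multiplicity forced by a putative Siegel zero of $L(s,\pi_1\times\pi_2\times\pi_3)$.

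For the general type case, since $\pi_1,\pi_2$ are non-dihedral and not twist-equivalent, Ramakrishnan's theorem gives that $\pi_1\boxtimes\pi_2$ is cuspidal on $\GL_4(\bA_F)$. I would form
\[
\Pi := (\pi_1\boxtimes\pi_2)\boxplus\tilde\pi_3,
\]
an isobaric automorphic representation of $\GL_6(\bA_F)$ with two visibly distinct cuspidal constituents. By Jacquet--Shalika, $L(s,\Pi\times\tilde\Pi)$ has non-negative Dirichlet coefficients and decomposes as
\begin{align*}
L(s,\Pi\times\tilde\Pi) =\,& L\bigl(s,(\pi_1\boxtimes\pi_2)\times(\tilde\pi_1\boxtimes\tilde\pi_2)\bigr)\cdot L(s,\pi_3\times\tilde\pi_3)\\
 &\cdot L(s,\pi_1\times\pi_2\times\pi_3)\cdot L(s,\tilde\pi_1\times\tilde\pi_2\times\tilde\pi_3).
\end{align*}
Under general type, the first two factors each contribute a simple pole at $s=1$ (and the remaining factors are entire), so $L(s,\Pi\times\tilde\Pi)$ has exactly a double pole at $s=1$. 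A real Siegel zero $\beta$ of $L(s,\pi_1\times\pi_2\times\pi_3)$ is also a zero of the conjugate factor $L(s,\tilde\pi_1\times\tilde\pi_2\times\tilde\pi_3)$, so $L(s,\Pi\times\tilde\Pi)$ vanishes to order at least $2$ at $\beta$. A Hoffstein--Lockhart-type positivity argument then forces $\beta\le 1 - c/\log C(\Pi)$ for an absolute effective $c>0$, and the estimate $C(\Pi)\ll C(\pi_1)C(\pi_2)C(\pi_3)$ yields the desired standard zero-free region for $L(s,\pi_1\times\pi_2\times\pi_3)$.

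For the partial twist-equivalent case, after relabeling we may assume $\pi_2\cong\pi_1\otimes\chi$ for a Hecke character $\chi$, while $\pi_3$ is twist-inequivalent to $\pi_1$. Ramakrishnan's decomposition
\[
\pi_1\boxtimes\pi_2 \cong (\Sym^2\pi_1\otimes\chi) \boxplus (\omega_{\pi_1}\chi)
\]
then gives the factorization
\[
L(s,\pi_1\times\pi_2\times\pi_3) = L(s,\Sym^2\pi_1\otimes\chi\times\pi_3)\cdot L(s,\pi_3\otimes\omega_{\pi_1}\chi).
\]
The second factor is the twisted standard $\GL(2)$ $L$-function of the non-dihedral $\pi_3$, which has no Siegel zero by \cite{HoffsteinLockhart1994}. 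The first factor is a Rankin--Selberg $L$-function on $\GL(3)\times\GL(2)$, with $\Sym^2\pi_1\otimes\chi$ cuspidal (by Gelbart--Jacquet, since $\pi_1$ is non-dihedral) and the appropriate non-degeneracy against $\pi_3$ following from the twist-inequivalence hypotheses; one may invoke \cite{Thorner2021,Luo2023} or re-run the positivity argument with $\Pi':=(\Sym^2\pi_1\otimes\chi)\boxplus\tilde\pi_3$ on $\GL_5(\bA_F)$ to conclude.

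The main technical obstacle is establishing the positivity lemma with an absolute effective constant $c$ independent of $F$ and the $\pi_i$. This requires careful control of the analytic conductor $C(\Pi\times\tilde\Pi)$ in terms of $C(\pi_1)C(\pi_2)C(\pi_3)$, uniform Phragm\'en--Lindel\"of bounds in vertical strips, and a uniform lower bound on the residue of the double pole at $s=1$ (reducing to lower bounds of the form $L(1,\Ad\pi_i)$ and $L(1,\Ad\pi_1\times\Ad\pi_2)$). A secondary point is verifying that the diagonal factor $L(s,(\pi_1\boxtimes\pi_2)\times(\tilde\pi_1\boxtimes\tilde\pi_2))=\zeta_F(s)L(s,\Ad\pi_1)L(s,\Ad\pi_2)L(s,\Ad\pi_1\times\Ad\pi_2)$ truly has only a simple pole at $s=1$ under general type, which relies on non-dihedrality of $\pi_1,\pi_2$ together with their non-twist-equivalence so that the $\Ad\pi_i$ are cuspidal on $\GL_3$ and not twist-equivalent.
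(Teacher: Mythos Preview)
Your twist-equivalent reduction is essentially the paper's argument: factor through $\pi_1\boxtimes\pi_2\cong(\Ad(\pi_1)\otimes\omega_1\chi)\boxplus\omega_1\chi$ and invoke the $\GL(3)\times\GL(2)$ result of \cite{Luo2023}. The only missing detail is that the exceptional case in \cite{Luo2023} occurs precisely when $\Ad(\pi_1)$ is twist-equivalent to $\Ad(\pi_3)$; you need Lemma~\ref{multiplicity one} to pass from ``$\pi_1$ not twist-equivalent to $\pi_3$'' to this statement about adjoint lifts, but otherwise this part is fine.

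The general type argument, however, has a genuine gap. With $\Pi=(\pi_1\boxtimes\pi_2)\boxplus\ov{\pi_3}$ you have two distinct cuspidal summands, so by Lemma~\ref{Landau-Siegel zero lemma} the Rankin--Selberg $L$-function $L(s,\Pi\times\ov{\Pi})$ is allowed up to $e=1^2+1^2=2$ real zeros near $s=1$. Your putative Siegel zero contributes a zero of order exactly $a+b=1+1=2$ via the factors $L(s,\pi_1\times\pi_2\times\pi_3)$ and its conjugate. Since $a+b=2\not>2=e$, there is no contradiction: a pole of order $2$ is perfectly compatible with a nearby real zero of order $2$ (think of $\zeta_F(s)^2L(s,\chi)^2$ for a real quadratic $\chi$). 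The positivity method requires the strict inequality $a+b>e$.

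The paper repairs this by enlarging the auxiliary sum to
\[
\Pi=(\pi_1\boxtimes\pi_2)\boxplus\ov{\pi_3}\boxplus(\Ad(\pi_1)\boxtimes\ov{\pi_3}),
\]
with the third summand cuspidal by Lemma~\ref{Rankin-Selberg product functorality}(2) and Lemma~\ref{multiplicity one}. Now $e=3$, but the cross term $L(s,(\pi_1\boxtimes\pi_2)\times(\Ad(\pi_1)\boxtimes\pi_3))$ factors via the Clebsch--Gordan decomposition $\pi_1\boxtimes\Ad(\pi_1)=\pi_1\boxplus A^3(\pi_1)$ as
\[
L(s,\pi_1\times\pi_2\times\pi_3)\cdot L(s,A^3(\pi_1)\times(\pi_2\boxtimes\pi_3)),
\]
yielding an \emph{additional} copy of the triple product (and similarly for the conjugate). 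This pushes the multiplicity to $a+b=4>3=e$, and Lemma~\ref{Landau-Siegel zero lemma} concludes. The key idea you are missing is this amplification step, which exploits the special Clebsch--Gordan structure of $\GL(2)$ functorial lifts.
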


\begin{remark}
    Theorem \ref{Theorem A} is a simplified version of the full statement Theorem \ref{Theorem A, refined version}. See Theorem \ref{Theorem A, refined version} for a precise description on the conditions when $L(s,\pi_1 \times \pi_2 \times \pi_3)$ could possibly have a Landau-Siegel zero. In particular, $L(s,\pi_1 \times \pi_2 \times \pi_3)$ has no Landau-Siegel zero if $(\pi_1,\pi_2,\pi_3)$ is of general type.
\end{remark}

\begin{remark}
    To illustrate why there might be a Landau-Siegel zero when $(\pi_1,\pi_2,\pi_3)$ is of dihedral type, let us assume $\pi_1$ is dihedral. Then $\pi_1 = I_K^F(\chi_1)$ is induced from a character $\chi_1$ on some quadratic extension $K/F$. By the adjointness property of Rankin-Selberg $L$-functions we have 
    \begin{equation*}
        L(s,\pi_1 \times \pi_2 \times \pi_3) = L(s,B_F^K(\pi_2) \times B_F^K(\pi_3) \otimes \chi_1),
    \end{equation*}
    where $B_F^K(\pi_2)$ and $B_F^K(\pi_3)$ are base changes from $F$ to $K$ of $\pi_2$ and $\pi_3$, respectively. Thus the triple product $L$-function reduces to a Rankin-Selberg $L$-function on $\GL(2) \times \GL(2)$ over $K$, a case that has been studied in \cite{RamakrishnanWang2003}. According to \cite[Theorem A]{RamakrishnanWang2003}, there are two reduced cases where there might be a Landau-Siegel zero.

    However, our argument in Section \ref{Section 3.3} gives more information on the possible Landau-Siegel zeros of $L(s,\pi_1 \times \pi_2 \times \pi_3)$ than those that can be obtained by directly applying \cite[Theorem A]{RamakrishnanWang2003}. This is due to the special natures of triple product $L$-functions.
\end{remark}

\begin{remark}
    To see why there might be a Landau-Siegel zero when $\pi_1,\pi_2,\pi_3$ are all twist-equivalent, let us simply assume that $\pi_1 = \pi_2 = \pi_3 = \pi$ is a non-dihedral cuspidal representation of $\GL_2(\bA_F)$ with central character $\omega_\pi$. In this case the triple product $L$-function decomposes as 
    \begin{equation*}
        L(s,\pi \times \pi \times \pi) = L(s,\Sym^3(\pi)) \cdot L(s,\pi \otimes \omega_\pi)^2.
    \end{equation*}
    However, eliminating Landau-Siegel zeros of the symmetric cube $L$-function $L(s,\Sym^3(\pi))$ is a long-standing open problem, which is only known under the (very strong) hypothesis that certain twisted symmetric fifth power $L$-function is analytic in some interval $(t,1)$, $t<1$.
\end{remark}

Let us proceed to describe our second main result. Let $\pi_1,\pi_2$ be cuspidal automorphic representations of $\GL_2(\bA_F)$. Let $\pi$ be a cuspidal automorphic representation of $\GL_3(\bA_F)$. Then the triple product $L$-function $L(s,\pi_1 \times \pi_2 \times \pi)$ on $\GL(2) \times \GL(2) \times \GL(3)$ can likewise be viewed as a Rankin-Selberg $L$-function on $\GL(4) \times \GL(3)$:
\begin{equation*}
    L(s,\pi_1 \times \pi_2 \times \pi) = L(s,(\pi_1 \boxtimes \pi_2) \times \pi).
\end{equation*}

We say the triple $(\pi_1,\pi_2,\pi)$ is of 
\begin{itemize}
    \item \textit{general type}, if $\pi_1,\pi_2$ are non-dihedral, not twist-equivalent, and $\pi$ is not twist-equivalent to either $\Ad(\pi_1)$ or $\Ad(\pi_2)$.
    \item \textit{twist-equivalent type}, if $\pi_1,\pi_2$ are non-dihedral, but either (1) $\pi_1$ is twist-equivalent to $\pi_2$, or (2) $\pi$ is twist-equivalent to $\Ad(\pi_1)$ or $\Ad(\pi_2)$.
    \item \textit{dihedral type}, if at least one of $\pi_1,\pi_2$ is dihedral.
\end{itemize}

Our second main result is as follows.

\begin{thmx}\label{Theorem B}
    There exists an absolute effective constant $c>0$ such that the triple product $L$-function $L(s,\pi_1 \times \pi_2 \times \pi)$ has no Landau-Siegel zero relative to $c$, except possibly in the case where $\pi$ is twist-equivalent to $\Ad(\pi_1)$ or $\Ad(\pi_2)$.
\end{thmx}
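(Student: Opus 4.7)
The approach follows the case analysis of Theorem~\ref{Theorem A}, using the identification $L(s, \pi_1 \times \pi_2 \times \pi) = L(s, (\pi_1 \boxtimes \pi_2) \times \pi)$ combined with Ramakrishnan's automorphy of $\pi_1 \boxtimes \pi_2$ on $\GL_4(\bA_F)$. In the general type, $\sigma := \pi_1 \boxtimes \pi_2$ is cuspidal, so I would form the isobaric sum $\Pi := \sigma \boxplus \tilde{\pi}$ on $\GL_7(\bA_F)$ and study the Rankin-Selberg square
\begin{equation*}
L(s, \Pi \times \tilde{\Pi}) = L(s, \sigma \times \tilde{\sigma}) \cdot L(s, \pi \times \tilde{\pi}) \cdot L(s, \pi_1 \times \pi_2 \times \pi) \cdot L(s, \tilde{\pi}_1 \times \tilde{\pi}_2 \times \tilde{\pi}),
\end{equation*}
whose logarithm has non-negative Dirichlet coefficients. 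Using the factorization $L(s, \sigma \times \tilde{\sigma}) = \zeta_F(s) L(s, \Ad(\pi_1)) L(s, \Ad(\pi_2)) L(s, \Ad(\pi_1) \times \Ad(\pi_2))$ and observing that $\pi_1$ not twist-equivalent to $\pi_2$ implies $\Ad(\pi_1) \not\cong \Ad(\pi_2)$, the pole order at $s=1$ is exactly $2$, matched by a zero of order at least $2$ at $\beta$ coming from the triple product and its complex conjugate. The Hoffstein-Ramakrishnan positivity argument then yields $1 - \beta \gg 1/\log C(\pi_1 \times \pi_2 \times \pi)$.

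In the twist-equivalent case with $\pi_2 = \pi_1 \otimes \chi$, the decomposition $\pi_1 \boxtimes \pi_2 = (\Ad(\pi_1) \otimes \omega_{\pi_1}\chi) \boxplus (\omega_{\pi_1}\chi)$ yields
\begin{equation*}
L(s, \pi_1 \times \pi_2 \times \pi) = L(s, \Ad(\pi_1) \times \pi \otimes \omega_{\pi_1}\chi) \cdot L(s, \pi \otimes \omega_{\pi_1}\chi).
\end{equation*}
The second factor has no Landau-Siegel zero by standard $\GL_3$ results (Banks), and the first, a Rankin-Selberg $L$-function on $\GL_3 \times \GL_3$, has no Landau-Siegel zero unless $\pi$ is twist-equivalent to $\Ad(\pi_1)$, precisely the exceptional case. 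For the dihedral case with $\pi_1 = I_K^F(\chi_1)$, induction-base change adjointness gives
\begin{equation*}
L(s, \pi_1 \times \pi_2 \times \pi) = L(s, (B_F^K(\pi_2) \otimes \chi_1) \times B_F^K(\pi)),
\end{equation*}
a Rankin-Selberg $L$-function on $\GL_2 \times \GL_3$ over $K$. Since $\pi_1$ dihedral forces $\Ad(\pi_1)$ to be non-cuspidal, the exceptional case cannot occur for cuspidal $\pi$; the Landau-Siegel zero is then ruled out by known results on $\GL_2 \times \GL_3$ Rankin-Selberg $L$-functions (e.g.\ Luo).

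The main obstacle I anticipate is the dihedral case, where $B_F^K(\pi_2)$ or $B_F^K(\pi)$ may fail to be cuspidal over $K$ (for instance when $\pi_2$ is itself dihedral, or when $\pi$ is invariant under the quadratic character of $K/F$), requiring a subcase analysis reducing to lower-rank Rankin-Selberg or standard $L$-functions. A careful tracking of twist-equivalence under base change is needed to verify that the exceptional condition $\pi \sim \Ad(\pi_i)$ descends correctly between $K$ and $F$. Secondary technical issues include extracting a uniform effective constant $c > 0$ across all cases, especially in the general type where the positivity argument operates on an $L$-function attached to a high-rank general linear group, requiring careful accounting of analytic conductors.
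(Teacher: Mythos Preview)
Your general type argument has a genuine gap. With $\Pi = (\pi_1 \boxtimes \pi_2) \boxplus \tilde{\pi}$, the isobaric multiplicities are $e_1 = e_2 = 1$, so Lemma~\ref{Landau-Siegel zero lemma} gives $e = 2$: the auxiliary $L(s,\Pi \times \tilde{\Pi})$ has at most two Landau-Siegel zeros. But a real zero $\beta$ of the triple product contributes exactly two zeros (from the factor and its conjugate), so $a+b = 2 = e$ rather than $a+b > e$, and the positivity argument yields nothing. This is precisely why the paper instead takes
\[
\Pi = (\pi_1 \boxtimes \pi) \boxplus \ov{\pi_2} \boxplus (\Ad(\pi_1) \boxtimes \ov{\pi_2}),
\]
so that the cross term $L(s,(\pi_1 \boxtimes \pi) \times (\Ad(\pi_1) \boxtimes \pi_2))$ factors via $\pi_1 \boxtimes \Ad(\pi_1) = \pi_1 \boxplus A^3(\pi_1)$ to produce an \emph{additional} copy of the triple product. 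This pushes the zero multiplicity to $4$ against $e = 3$.

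Two secondary issues. In the twist-equivalent case, your claim that $L(s,\Ad(\pi_1) \times \pi \otimes \omega_{\pi_1}\chi)$ has no Landau-Siegel zero unless $\pi \sim \Ad(\pi_1)$ is not a known $\GL_3 \times \GL_3$ result you can cite; the paper handles it by splitting on whether $\pi \otimes \omega_1\chi$ is self-dual (if not, Lemma~\ref{new Rankin-Selberg Landau-Siegel zero}(1); if so, Ramakrishnan's classification reduces to $L(s,\Ad(\pi_1) \times \Ad(\pi_3) \otimes \mu)$ and one invokes Thorner's $\Ad \times \Ad$ analysis). In the dihedral case, you are right to flag the obstacle: directly applying Luo's $\GL_2 \times \GL_3$ result over $K$ leaves an exceptional subcase ($B_F^K(\pi) \sim \Ad(B_F^K(\pi_2))$) that does \emph{not} correspond to the exception in Theorem~\ref{Theorem B}. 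The paper avoids this by exploiting instead that $\pi_1 \boxtimes \pi_2$ is $\eta_{K/F}$-monomial while $\pi \in \cA_0(3,F)$ can never be, and applying Lemma~\ref{new Rankin-Selberg Landau-Siegel zero}(2) directly over $F$.
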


\begin{remark}
    See Theorem \ref{Theorem B, refined version} for a more refined version of Theorem \ref{Theorem B}, where we give a precise description on the conditions when $L(s,\pi_1 \times \pi_2 \times \pi)$ could possibly have a Landau-Siegel zero. In particular, $L(s,\pi_1 \times \pi_2 \times \pi)$ has no Landau-Siegel zero if $(\pi_1,\pi_2,\pi)$ is of general type or dihedral type.
\end{remark}

\begin{remark}
    It is interesting to see that $L(s,\pi_1 \times \pi_2 \times \pi)$ has no Landau-Siegel zero if $(\pi_1,\pi_2,\pi)$ is of dihedral type. To see why this is not obvious, let us assume $\pi_1 = I_K^F(\chi_1)$ is dihedral. Then we have 
    \begin{equation*}
        L(s,\pi_1 \times \pi_2 \times \pi) = L(s,B_F^K(\pi_2) \times B_F^K(\pi) \otimes \chi_1).
    \end{equation*}
    Landau-Siegel zeros of the $\GL(2) \times \GL(3)$ Rankin-Selberg $L$-function on the right side have been studied in \cite{Luo2023}. According to \cite[Theorem 2]{Luo2023}, there might be a Landau-Siegel zero if $B_F^K(\pi_2)$ is cuspidal non-dihedral, and $B_F^K(\pi)$ is twist-equivalent to $\Ad(B_F^K(\pi_2))$. However, using a different argument we are able to show that there is no Landau-Siegel zero at all in this dihedral case. This argument, again, relies on special natures of triple product $L$-functions.
\end{remark}

\begin{remark}
    To see why there might be a Landau-Siegel zero when $\pi$ is twist-equivalent to $\Ad(\pi_1)$ or $\Ad(\pi_2)$, let us assume that $\pi = \Ad(\pi_1)$, and $\pi_1,\pi_2$ are not twist-equivalent. In this case we have 
    \begin{equation*}
        L(s,\pi_1 \times \pi_2 \times \pi) = L(s,\Sym^3(\pi_1) \times \pi_2 \otimes \omega_1^{-1}) \cdot L(s,\pi_1 \times \pi_2),
    \end{equation*}
    where $\omega_1$ is the central character of $\pi_1$. Eliminating Landau-Siegel zeros of the Rankin-Selberg $L$-function $L(s,\Sym^3(\pi_1) \times \pi_2 \otimes \omega_1^{-1})$ is an open problem.
\end{remark}

In \cite{Thorner2025}\cite{Thorner2025symmetricsquare}, Thorner addressed the importance of allowing an additional character twist to get a standard zero-free region. In our case, let $\pi_1,\pi_2$ be cuspidal automorphic representations on $\GL_2(\bA_F)$ and $\pi$ be a cuspidal automorphic representation on $\GL_2(\bA_F)$ or $\GL_3(\bA_F)$. Let $\eta$ be an arbitrary (idele class) character on $F$. Then in view of the relation
$$L(s,\pi_1 \times \pi_2 \times \pi \otimes \eta) = L(s,\pi_1 \times \pi_2 \times (\pi \otimes \eta)),$$
we can study the Landau-Siegel zeros of $L(s,\pi_1 \times \pi_2 \times \pi \otimes \eta)$ by applying Theorem \ref{Theorem A} and Theorem \ref{Theorem B} to the triple $(\pi_1,\pi_2,\pi \otimes \eta)$. In particular, if $(\pi_1,\pi_2,\pi)$ is of general type, then $(\pi_1,\pi_2,\pi \otimes \eta)$ is also of general type. By taking $\eta = |\cdot|^{it}$, $t \in \bR$, we immediately conclude the following:

\begin{thmx}
     Assume $(\pi_1,\pi_2,\pi)$ is of general type. Then there exists an effective absolute constant $c>0$, such that $L(s,\pi_1 \times \pi_2 \times \pi) \neq 0$ in the following region:
    $$\left\{s = \sigma +it: \sigma > 1-\frac{c}{\log (C(\pi_1)C(\pi_2)C(\pi)(|t|+3)^{[F:\bQ]})}\right\}.$$
\end{thmx}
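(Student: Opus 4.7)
The strategy is the $t$-aspect reduction suggested in the paragraph preceding the theorem. Fix a candidate zero $s_0 = \sigma_0 + it_0$ of $L(s,\pi_1\times\pi_2\times\pi)$, and set $\eta = |\cdot|^{it_0}$, a unitary, everywhere unramified idele class character on $F$. Twisting $\pi$ by $\eta$ shifts every Satake and Langlands parameter by $it_0$, which globalizes to
$$L(s, \pi_1\times\pi_2\times(\pi\otimes\eta)) = L(s+it_0,\, \pi_1\times\pi_2\times\pi).$$
Evaluating at $s = \sigma_0$ transforms the hypothetical complex zero at $s_0$ into a \emph{real} zero of $L(s,\pi_1\times\pi_2\times(\pi\otimes\eta))$ at $\sigma_0$, so it is enough to rule out a Landau-Siegel zero of the twisted triple via Theorems~\ref{Theorem A} and \ref{Theorem B}.

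Before applying those theorems I would verify that $(\pi_1,\pi_2,\pi\otimes\eta)$ remains of general type. Only the conditions involving $\pi\otimes\eta$ need attention. Non-dihedrality is preserved under arbitrary character twists, since if $\pi\otimes\eta$ were induced from a character on a quadratic extension $K/F$ then so would $\pi$ be (twist back by $\eta^{-1}$). Twist-equivalence is likewise preserved under further twists, so $\pi\otimes\eta$ fails to be twist-equivalent to $\pi_1,\pi_2$ (in the $\GL(2)\times\GL(2)\times\GL(2)$ case) or to $\Ad(\pi_1),\Ad(\pi_2)$ (in the $\GL(2)\times\GL(2)\times\GL(3)$ case) precisely because $\pi$ does.

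Theorem~\ref{Theorem A} or Theorem~\ref{Theorem B} then provides an absolute effective $c_0 > 0$ such that $L(s, \pi_1\times\pi_2\times(\pi\otimes\eta))$ has no real zero in $(1 - c_0/\log(C(\pi_1)C(\pi_2)C(\pi\otimes\eta)),\,1)$. The remaining ingredient is a standard conductor estimate. Since $\eta$ is everywhere unramified, the finite part of $C(\pi\otimes\eta)$ equals that of $C(\pi)$; at each archimedean place the Langlands parameters shift by $it_0$, giving a local contribution bounded by $O((|t_0|+3)^n)$ times the original, where $n = \dim\pi \in \{2,3\}$. Multiplying over the $[F:\bQ]$ archimedean places yields $C(\pi\otimes\eta) \ll C(\pi)(|t_0|+3)^{n[F:\bQ]}$, so
$$\log\bigl(C(\pi_1)C(\pi_2)C(\pi\otimes\eta)\bigr) \ll \log\bigl(C(\pi_1)C(\pi_2)C(\pi)(|t_0|+3)^{[F:\bQ]}\bigr),$$
and absorbing the implicit constant into $c$ produces the claimed zero-free region. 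I do not anticipate a substantive obstacle in any step: the real content lies in Theorems~\ref{Theorem A} and \ref{Theorem B}, and this corollary is their soft $t$-aspect consequence, modulo the routine archimedean conductor bound under a pure imaginary twist.
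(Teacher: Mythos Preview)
Your proposal is correct and follows exactly the approach the paper takes: twist by $\eta = |\cdot|^{it}$, observe that general type is preserved under character twists, and invoke Theorems~\ref{Theorem A} and~\ref{Theorem B}. The paper states this in a single sentence and leaves the conductor bookkeeping implicit; you have simply filled in the routine details (the archimedean conductor shift and the preservation of general type) that the paper suppresses.
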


\subsection{Proof Strategy}

As is well-known to experts, the technical heart behind any attempt to eliminate Landau-Siegel zeros of an $L$-function $L(s,\pi)$ (or $L(s,\pi_1 \times \pi_2)$) is to construct an auxiliary $L$-function $L(s)$ with at most $r$ Landau-Siegel zeros, in such a way that we can decompose $L(s)$ as 
\begin{equation*}
    L(s) = L(s,\pi)^a \cdot L(s,\ov{\pi})^b \cdot L_{Res}(s),
\end{equation*}
with $a+b > r$, and the residual part $L_{Res}(s)$ is analytic in some interval $(t,1)$, $t<1$. If we have constructed such an $L(s)$, then any Landau-Siegel zero of $L(s,\pi)$ would be a Landau-Siegel zero of order at least $a+b$ of $L(s)$, which cannot exist because $L(s)$ has at most $r$ Landau-Siegel zeros, and $r<a+b$. 

For most of the work in this field, including the present paper, the auxiliary $L$-function is constructed as a special type of Rankin-Selberg $L$-function, $L(s) = L(s,\Pi \times \ov{\Pi})$, where $\Pi$ is some carefully chosen isobaric automorphic representation. Thus if there is anything creative in this paper, it is the construction of $\Pi$, which, in the new context of triple product $L$-functions, exhibits some new features.  

Recently, in his preprint \cite{Thorner2025} towards the study of Landau-Siegel zeros of Rankin-Selberg $L$-functions of symmetric power liftings of Hilbert modular forms over totally real fields, Thorner (for the first time) constructed auxiliary $L$-functions $L(s)$ that are not in the shape of $L(s,\Pi \times \ov{\Pi})$. In \cite{Thorner2025symmetricsquare}, he also studied Landau-Siegel zeros of Rankin-Selberg $L$-functions $L(s,\Ad(\pi_1) \times \Ad(\pi_2) \otimes \chi)$, where $\pi_1,\pi_2$ are cuspidal representations of $\GL_2(\bA_F)$, and $\chi$ is a character on $F$. As we shall see later, our study of Landau-Siegel zeros of $\GL(2) \times \GL(2) \times \GL(3)$ triple product $L$-functions reduces to this type in some particular cases. So we would like to express our gratefulness to Prof. Jesse Thorner for his great work. 

This paper is organized as follows. In Section \ref{Section 2} we review some preliminaries, including some instances of Langlands functoriality and some known cases of the nonexistence of Landau-Siegel zeros that will be used throughout this paper. In Section \ref{Section 3} and Section \ref{Section 4} we prove Theorem \ref{Theorem A} and Theorem \ref{Theorem B}, respectively.

\section{Preliminaries}\label{Section 2}

\subsection{Langlands Functoriality}
In this section we review some known instances of Langlands functoriality that are crucial to our work. First let us introduce some notations. We use $\cA_0(n,F)$ (resp. $\cA(n,F)$) to denote the set of cuspidal (resp. isobaric) automorphic representations of $\GL_n(\bA_F)$ with unitary central characters $\omega_\pi$. For $\pi \in \cA(n,F)$ we denote by $\ov{\pi}$ the \textit{dual} representation of $\pi$. We say $\pi$ is \textit{self-dual}, if $\pi \cong \ov{\pi}$. We say $\pi$ is \textit{monomial}, if $\pi \cong \pi \otimes \eta$ for some non-trivial character $\eta$ on $F$. We say $\pi_1,\pi_2 \in \cA_0(n,F)$ are \textit{twist-equivalent}, if $\pi_1 \cong \pi_2 \otimes \mu$ for some character $\mu$ on $F$.

Let us start with base change and automorphic induction. Let $K/F$ be a cyclic extension of number fields of prime degree $\ell$. Let $n \geq 1$ be a positive integer. The base change map 
\begin{equation*}
    B_F^K:\cA(n,F) \to \cA(n,K), \hspace{3mm} \pi \mapsto B_F^K(\pi)
\end{equation*}
and the automorphic induction map 
\begin{equation*}
    I_K^F:\cA(n,K) \to \cA(n\ell,F), \hspace{3mm} \alpha \mapsto I_K^F(\alpha)
\end{equation*}
were constructed in \cite{ArthurClozel1989}. These maps satisfy the following properties: 

\begin{lemma}\label{base change and automporphic induction}
    Let $\theta = \theta_{K/F}$ be a generator of the Galois group Gal$(K/F)$. Let $\eta = \eta_{K/F}$ be the character on $F$ associated to $K/F$. Then 
    \begin{enumerate}
        \item Let $\pi \in \cA(n\ell,F)$. Then $\pi$ lies in the image of $I_K^F$ if and only if $\pi \cong \pi \otimes \eta$.
        \item Let $\alpha \in \cA(n,K)$. Then $\alpha$ lies in the image of $B_F^K$ if and only if $\alpha \cong \alpha \circ \theta$.
        \item For $\pi \in \cA(n,F)$ and $\alpha \in \cA(m,K)$, we have 
        \begin{equation*}
            L(s,\pi \times I_K^F(\alpha)) = L(s,B_F^K(\pi) \times \alpha).
        \end{equation*}
        \item Let $\pi \in \cA_0(n,F)$ be cuspidal. Then $B_F^K(\pi)$ is cuspidal if and only if $\pi$ is not isomorphic to $\pi \otimes \eta$. Moreover, we have 
        $$I_K^F(B_F^K(\pi)) \cong \boxplus_{j=0}^{\ell-1} \pi \otimes \eta^j.$$
        \item Let $\alpha \in \cA_0(n,K)$ be cuspidal. Then $I_K^F(\alpha)$ is cuspidal if and only if $\alpha$ is not isomorphic to $\alpha \circ \theta$. Moreover, we have 
        $$B_F^K(I_K^F(\alpha)) \cong \boxplus_{j=0}^{\ell-1} \alpha \circ \theta^j.$$
    \end{enumerate}
\end{lemma}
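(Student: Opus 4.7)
The plan is to invoke the construction of base change and automorphic induction for cyclic prime extensions from \cite{ArthurClozel1989}, and then derive each listed property from this foundational work, either by direct citation or by short $L$-function bookkeeping on top of it.

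First I would recall that the characterizations in (1) and (2) are established in Arthur--Clozel through a cyclic descent argument based on the comparison of the $\theta$-twisted trace formula on $\GL_n(K)$ with the ordinary trace formula on $\GL_{n\ell}(F)$. The stable form $\alpha \cong \alpha \circ \theta$ encodes Galois invariance of $\theta$-stable automorphic representations, and is exactly what the twisted trace formula detects; dually, the condition $\pi \cong \pi \otimes \eta$ encodes the image of parabolic induction from the Weil group of $K$ to that of $F$ at the parameter level, and matches the monomial representations produced by $I_K^F$. I would simply cite the main theorems of \cite{ArthurClozel1989} for both characterizations.

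Next, for (3), I would verify the $L$-function identity place by place using the local base change and local automorphic induction constructed in loc.\ cit. At unramified places, this is a direct consequence of local Langlands and the Frobenius reciprocity identity $L_v(s, \pi_v \times I_{K_w/F_v}(\alpha_w)) = L_v(s, B_{F_v}^{K_w}(\pi_v) \times \alpha_w)$, with analogous matching for epsilon and gamma factors; at ramified places, the identity is built into the definition of the local maps, since they are characterized by preservation of twisted $L$- and $\varepsilon$-factors.

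For (4) and (5), I would bootstrap from (1)--(3). Using (3) with arbitrary test representations $\sigma \in \cA_0(m,F)$,
\begin{equation*}
    L(s, I_K^F(B_F^K(\pi)) \times \sigma) = L(s, B_F^K(\pi) \times B_F^K(\sigma)),
\end{equation*}
and the right-hand side factors as $\prod_{j=0}^{\ell-1} L(s, (\pi \otimes \eta^j) \times \sigma)$ (this factorization is checked on unramified Satake parameters and then propagated by strong multiplicity one). Hence $I_K^F(B_F^K(\pi)) \cong \boxplus_{j=0}^{\ell-1} \pi \otimes \eta^j$; the cuspidality criterion for $B_F^K(\pi)$ follows by counting inequivalent summands, which is $\ell$ precisely when $\pi \not\cong \pi \otimes \eta$. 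The analogous computation with $\theta$ in place of $\eta$, together with (2), yields (5).

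The main obstacle is fully absorbed into the citation: the hard input is the cyclic base change and automorphic induction theorems of \cite{ArthurClozel1989}, which require the stable and twisted trace formula comparison together with the fundamental lemma in the cyclic prime case. Once these are granted, properties (3)--(5) reduce to the routine $L$-function bookkeeping sketched above.
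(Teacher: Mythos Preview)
Your proposal is correct and aligns with the paper's treatment: the paper does not supply a proof of this lemma at all, but simply records it as a summary of properties established in \cite{ArthurClozel1989}. Your sketch is therefore strictly more detailed than what the paper provides, and the core strategy---cite Arthur--Clozel for the hard cyclic descent input and reduce (3)--(5) to $L$-function bookkeeping and strong multiplicity one---is exactly the intended justification behind the paper's bare citation.
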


We then review functoriality results of symmetric power liftings. Let $\pi \in \cA_0(2,F)$. Denote by $\Sym^m(\pi)$ the symmetric $m^{th}$-power lifting of $\pi$. We set $\Ad(\pi) := \Sym^2(\pi) \otimes \omega_\pi^{-1}$, $A^3(\pi) := \Sym^3(\pi) \otimes \omega_\pi^{-1}$ and $A^4(\pi) := \Sym^4(\pi) \otimes \omega_\pi^{-2}$. We say $\pi$ is 
\begin{itemize}
    \item \textit{dihedral}, if $\pi$ is monomial, or equivalently, if $\pi = I_K^F(\chi)$ is an automorphic induction of a character $\chi$ on some quadratic extension $K/F$.
    \item \textit{tetrahedral}, if $\Sym^2(\pi)$ is cuspidal and monomial.
    \item \textit{octahedral}, if $\Sym^3(\pi)$ is cuspidal and monomial. 
    \item of \textit{solvable polyhedral type}, if $\pi$ is either dihedral or tetrahedral or octahedral.
\end{itemize}
Below is a summary of \cite[Theorem 9.3]{GelbartJacquet1978},\cite[Theorem B]{KimShahidi2002Cube},\cite[Theorem 2.2.2]{KimShahidi2002FourthPowerCuspidality},\cite[Theorem B]{Kim2003fourth},\cite[Theorem 3.3.7]{KimShahidi2002FourthPowerCuspidality} and \cite[Theorem A]{NewtonThorne2025}:

\begin{lemma}\label{symmetric power functorality}
    Let $\pi \in \cA_0(2,F)$. Then we have 
    \begin{enumerate}
        \item $\Sym^2(\pi) \in \cA(3,F)$. $\Sym^2(\pi)$ is not cuspidal if and only if $\pi$ is dihedral. Moreover, $\Ad(\pi)$ is self-dual.
        \item $\Sym^3(\pi) \in \cA(4,F)$. $\Sym^3(\pi)$ is not cuspidal if and only if $\pi$ is of dihedral or tetrahedral type. Moreover, when $\pi$ is of tetrahedral type, write $\Ad(\pi) \cong \Ad(\pi) \otimes \eta$ for some cubic character $\eta$ on $F$. In this case we have
        $$A^3(\pi) = (\pi \otimes \eta) \boxplus (\pi \otimes \eta^2).$$
        \item $\Sym^4(\pi) \in \cA(5,F)$. $\Sym^4(\pi)$ is not cuspidal if and only if $\pi$ is of solvable polyhedral type. Moreover,
        \begin{enumerate}
            \item When $\pi$ is of tetrahedral type, write $\Ad(\pi) \cong \Ad(\pi) \otimes \eta$ for some cubic character $\eta$ on $F$. In this case we have 
            $$A^4(\pi) = \Ad(\pi) \boxplus\eta \boxplus \eta^2.$$
            \item When $\pi$ is of octahedral type, write $\Sym^3(\pi) \cong \Sym^3(\pi) \otimes \eta$ for some quadratic character $\eta$ on $F$. Let $K/F$ be the quadratic extension associated with $\eta$. Then $\Ad(B_F^K(\pi)) \cong \Ad(B_F^K(\pi)) \otimes \chi$ for some cubic character $\chi$ on $K$. In this case we have 
            $$A^4(\pi) = I_K^F(\chi^{-1}) \boxplus (\Ad(\pi) \otimes \eta).$$
        \end{enumerate}
        \item Assume $F$ is totally real, and $\pi$ is non-dihedral regular algebraic. Then $\Sym^{m-1}(\pi) \in \cA_0(m,F)$ for all $m \geq 2$.
    \end{enumerate}
\end{lemma}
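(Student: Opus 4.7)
The plan is to verify each item by invoking the corresponding functoriality theorem listed in the references preceding the statement and then reading off the stated decomposition in each degenerate case.

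For item (1), the existence of $\Sym^2(\pi) \in \cA(3,F)$ is the Gelbart-Jacquet lift, and self-duality of $\Ad(\pi) = \Sym^2(\pi) \otimes \omega_\pi^{-1}$ is immediate from the definition and the unitarity of $\omega_\pi$. The non-cuspidality criterion I would deduce from the standard pole analysis of $L(s,\Sym^2(\pi) \times \ov{\Sym^2(\pi)})$ at $s=1$ via Jacquet-Shalika, combined with strong multiplicity one; equivalently, $\Sym^2(\pi)$ fails to be cuspidal precisely when $\pi$ is monomial, i.e. dihedral.

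For items (2) and (3), I would cite Kim-Shahidi for the existence of $\Sym^3(\pi) \in \cA(4,F)$ and Kim for $\Sym^4(\pi) \in \cA(5,F)$. The classification of non-cuspidal cases then follows from the observation that the failure of cuspidality of $\Sym^3(\pi)$ or $\Sym^4(\pi)$ forces $\pi$ to be of solvable polyhedral type; this can be extracted from the pole structure of the relevant Rankin-Selberg $L$-functions. For the explicit decompositions, in the tetrahedral case the characterization $\Ad(\pi) \cong \Ad(\pi) \otimes \eta$ with $\eta$ a cubic character forces $A^3(\pi)$ to split as $(\pi \otimes \eta) \boxplus (\pi \otimes \eta^2)$, which I would verify by matching standard $L$-functions on both sides and applying strong multiplicity one; the decomposition of $A^4(\pi)$ in case (3a) follows in the same spirit. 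For the octahedral subcase (3b), I would apply Lemma \ref{base change and automporphic induction} to transfer the problem to the quadratic extension $K/F$ attached to $\eta$, where $\Ad(B_F^K(\pi))$ becomes monomial by a cubic character $\chi$, and then unwind via the automorphic induction $I_K^F(\chi^{-1})$ together with the adjointness between $B_F^K$ and $I_K^F$ to arrive at the stated splitting $A^4(\pi) = I_K^F(\chi^{-1}) \boxplus (\Ad(\pi) \otimes \eta)$.

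Item (4) is the recent theorem of Newton-Thorne on symmetric power functoriality for regular algebraic non-dihedral cuspidal representations over totally real fields. The main obstacle in the whole lemma lies entirely in this item: its proof uses the full strength of modern automorphy lifting techniques (Taylor-Wiles patching together with Calegari-Geraghty enhancements), far beyond what could be established by elementary means. I would therefore treat it as a black box and quote it directly.
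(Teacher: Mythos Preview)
Your proposal is correct and matches the paper's treatment: the paper does not give a proof of this lemma at all, but simply records it as a summary of the cited functoriality theorems of Gelbart--Jacquet, Kim--Shahidi, Kim, and Newton--Thorne. Your sketch of how to extract the explicit decompositions in the degenerate cases is more detailed than what the paper provides, but the underlying approach---quote the relevant functoriality theorem for existence and cuspidality, then read off the isobaric decomposition in each solvable polyhedral case---is the same.
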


Finally we review functoriality results of Rankin-Selberg products on $\GL(2) \times \GL(2)$ and on $\GL(2) \times \GL(3)$. Below is a summary of \cite[Theorem M]{Ramakrishnan2000}, \cite[Theorem A]{KimShahidi2002Cube} and \cite[Theorem 3.1]{RamakrishnanWang2004}:

\begin{lemma}\label{Rankin-Selberg product functorality}
    Let $\pi_1,\pi_2 \in \cA_0(2,F)$. Let $\pi \in \cA_0(3,F)$. Then we have 
    \begin{enumerate}
        \item $\pi_1 \boxtimes \pi_2 \in \cA(4,F)$. Moreover, $\pi_1 \boxtimes \pi_2$ is cuspidal if and only if one the following happens:
        \begin{enumerate}
            \item $\pi_1,\pi_2$ are both non-dihedral, and $\pi_1$ is not twist-equivalent to $\pi_2$.
            \item One of $\pi_1,\pi_2$, say $\pi_1$, is dihedral. Then $\pi_1 = I_K^F(\chi_1)$ for a character $\chi_1$ on some quadratic extension $K/F$. And the base change $B_F^K(\pi_2)$ is cuspidal and not isomorphic to $B_F^K(\pi_2) \otimes (\chi_1 \circ\theta_{K/F}) \chi_1^{-1}$.
        \end{enumerate}
        \item $\pi_1 \boxtimes \pi \in \cA(6,F)$. Moreover, $\pi_1 \boxtimes \pi$ is not cuspidal if and only if one of the following happens:
        \begin{enumerate}
            \item $\pi_1$ is non-dihedral, and $\pi$ is twist-equivalent to $\Ad(\pi_1)$.
            \item $\pi_1$ is dihedral, $L(s,\pi) = L(s,\chi)$ for a character $\chi$ on a cubic, non-normal extension $K/F$, and the base change $B_F^K(\pi)$ satisfies $L(s,B_F^K(\pi)) = L(s,\psi)$ for a character $\psi$ on a quadratic extension $L/K$.
        \end{enumerate}
    \end{enumerate}
\end{lemma}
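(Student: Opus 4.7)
The plan is to treat existence and the cuspidality criterion separately. For the existence of $\pi_1 \boxtimes \pi_2 \in \cA(4,F)$ in part (1), I would invoke the main theorem of \cite{Ramakrishnan2000}, where the Cogdell-Piatetski-Shapiro converse theorem is combined with Langlands-Shahidi and triple-product integral inputs to realize the lift on $\GL_4$. For the existence of $\pi_1 \boxtimes \pi \in \cA(6,F)$ in part (2), I would cite \cite{KimShahidi2002Cube}, where the lift is produced via Langlands-Shahidi applied at an appropriate parabolic, followed by a converse theorem. In both cases I would take for granted that the resulting isobaric representation has local $L$- and $\varepsilon$-factors matching the naive tensor product at almost all places, which is what allows one to read off cuspidality from Rankin-Selberg analytic data.

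The substantive work lies in the cuspidality criterion. My plan uses the standard principle that an isobaric $\Pi$ on $\GL_n$ fails to be cuspidal iff there exists a cuspidal $\tau$ on some $\GL_m$ with $m<n$ such that $L(s, \Pi \times \ov{\tau})$ has a pole at $s=1$. Combined with adjointness
\begin{equation*}
L(s,(\pi_1 \boxtimes \pi_2) \times \ov{\tau}) = L(s,\pi_1 \times \pi_2 \times \ov{\tau}),
\end{equation*}
the cuspidality question reduces to detecting poles of triple-product $L$-functions, for which one may invoke the pole criterion of \cite{JacquetP-SShalika1983}. In case (1)(a), when $\pi_1,\pi_2$ are both non-dihedral, such a pole can only arise when $\pi_1$ is twist-equivalent to $\ov{\pi_2}$; since neither is dihedral, the pole criterion forces twist-equivalence of $\pi_1$ and $\pi_2$, and conversely this twist-equivalence explicitly produces a splitting. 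In case (1)(b), when $\pi_1 = I_K^F(\chi_1)$, Lemma \ref{base change and automporphic induction}(3) gives
\begin{equation*}
\pi_1 \boxtimes \pi_2 = I_K^F\bigl(B_F^K(\pi_2) \otimes \chi_1\bigr),
\end{equation*}
and Lemma \ref{base change and automporphic induction}(5) translates cuspidality on $\GL_4(\bA_F)$ into $B_F^K(\pi_2) \otimes \chi_1$ being cuspidal on $\GL_2(\bA_K)$ and non-$\theta_{K/F}$-invariant. Unpacking these two conditions reproduces exactly the statement of (1)(b).

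For part (2), the approach is parallel. In case (2)(a), the key algebraic identity
\begin{equation*}
\pi_1 \boxtimes \Ad(\pi_1) = A^3(\pi_1) \boxplus \pi_1,
\end{equation*}
obtained by decomposing the tensor product of the standard and symmetric-square representations of $\GL_2$ and applying Lemma \ref{symmetric power functorality}, shows that $\pi_1 \boxtimes \pi$ splits whenever $\pi$ is twist-equivalent to $\Ad(\pi_1)$; the converse direction applies the pole criterion to rule out any other twist relation when $\pi_1$ is non-dihedral. In case (2)(b), with $\pi_1 = I_K^F(\chi_1)$ dihedral, adjointness reduces the problem to the cuspidality of a $\GL_2 \times \GL_3$ Rankin-Selberg product over $K$, which I would then analyze along the lines of \cite[Theorem 3.1]{RamakrishnanWang2004}.

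The main obstacle I would anticipate is the fine bookkeeping in case (2)(b): one must trace $\pi \in \cA_0(3,F)$ through successive base changes (first to $K$, then to a quadratic extension of $K$) and match the resulting automorphic/Galois-side data with an explicit descent of $\pi$ to a character on a nonnormal cubic extension of $F$. This amounts to a Clifford-Mackey-style analysis on the conjectural Langlands group, which is precisely the technical core of \cite[Theorem 3.1]{RamakrishnanWang2004}, and in keeping with the \emph{summary} nature of this lemma I would invoke that theorem rather than redo its proof.
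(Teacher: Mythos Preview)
The paper does not give a proof of this lemma at all: it is stated as a direct summary of \cite[Theorem M]{Ramakrishnan2000}, \cite[Theorem A]{KimShahidi2002Cube}, and \cite[Theorem 3.1]{RamakrishnanWang2004}, with no further argument. Your proposal correctly identifies exactly these three references as the sources for existence and cuspidality, so in that sense you are aligned with the paper's treatment.

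Where you go further is in sketching the mechanisms behind the cuspidality criteria (pole criteria for isobaric representations, adjointness via Lemma~\ref{base change and automporphic induction}, the decomposition $\pi_1 \boxtimes \Ad(\pi_1) = A^3(\pi_1) \boxplus \pi_1$, and the Clifford--Mackey analysis underlying \cite[Theorem 3.1]{RamakrishnanWang2004}). This is more detail than the paper supplies, and it is accurate as an outline of how those cited theorems are proved. Given that the lemma is explicitly labeled a ``summary'' in the paper, your closing remark that you would invoke \cite[Theorem 3.1]{RamakrishnanWang2004} rather than redo its proof is exactly the right call and matches the paper's intent.
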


At the end of this section we prepare the following lemma for future reference:

\begin{lemma}\label{multiplicity one}
        Let $\pi_1,\pi_2 \in \cA_0(2,F)$ be non-dihedral. Suppose that $\Ad(\pi_1)$ and $\Ad(\pi_2)$ are twist-equivalent. Then $\pi_1$ and $\pi_2$ are also twist-equivalent.
\end{lemma}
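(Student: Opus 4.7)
The plan is to argue by contradiction: assume $\pi_1$ and $\pi_2$ are not twist-equivalent, and derive a contradiction via pole counting. Since $\bar{\pi}_2$ is twist-equivalent to $\pi_2$, this assumption together with the non-dihedrality of both $\pi_i$ places me in case (a) of Lemma~\ref{Rankin-Selberg product functorality}(1), so that $\Pi := \pi_1 \boxtimes \bar{\pi}_2 \in \cA_0(4,F)$ is cuspidal. By Lemma~\ref{symmetric power functorality}(1) each $\Ad(\pi_i)$ is cuspidal and self-dual on $\GL_3(\bA_F)$. A short preliminary computation also shows that $\chi$ must be a cubic character: the central character of $\Ad(\pi)$ is trivial (since $\det(\Sym^2 \rho) = (\det \rho)^3$ for any $2$-dimensional representation $\rho$), so comparing central characters in $\Ad(\pi_1) \cong \Ad(\pi_2) \otimes \chi$ forces $\chi^3 = 1$.

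The core step is to analyze $L(s, \Pi \times \bar{\Pi} \otimes \chi)$ in two different ways. Using the identity $\pi \boxtimes \bar{\pi} = \Ad(\pi) \boxplus \mathbf{1}$, the Langlands parameter $\rho_\Pi \otimes \bar{\rho}_\Pi$ decomposes as $(\Ad(\pi_1) \oplus \mathbf{1}) \otimes (\Ad(\pi_2) \oplus \mathbf{1})$, which yields the factorization
\begin{equation*}
    L(s, \Pi \times \bar{\Pi} \otimes \chi) = L(s, \Ad(\pi_1) \times \Ad(\pi_2) \otimes \chi) \cdot L(s, \Ad(\pi_1) \otimes \chi) \cdot L(s, \Ad(\pi_2) \otimes \chi) \cdot L(s, \chi).
\end{equation*}
Substituting the hypothesis $\Ad(\pi_1) \cong \Ad(\pi_2) \otimes \chi$ and invoking self-duality of $\Ad(\pi_1)$ turns the first factor into $L(s, \Ad(\pi_1) \times \ov{\Ad(\pi_1)})$, which has a simple pole at $s=1$. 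The two middle factors are non-trivial standard $L$-functions of cuspidal $\GL_3$ representations and hence entire; the last factor $L(s, \chi)$ is entire when $\chi \neq 1$ and contributes an additional simple pole when $\chi = 1$.

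To finish I compare with the behavior of the left-hand side. When $\chi = 1$, the right-hand side has a pole of order $2$ at $s=1$, whereas $L(s, \Pi \times \bar{\Pi})$ has only a simple pole by cuspidality of $\Pi$, a contradiction. When $\chi$ is a non-trivial cubic character, the right-hand side has a simple pole, while the left-hand side has a pole at $s=1$ only if $\bar{\Pi} \otimes \chi \cong \bar{\Pi}$, i.e., only if $\Pi$ is invariant under the twist by $\chi$; by Lemma~\ref{base change and automporphic induction}(1) this would force $\Pi = I_K^F(\alpha)$ for $K/F$ the cubic extension cut out by $\chi$ and some $\alpha \in \cA(n,K)$ with $3n = 4$, which is impossible. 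Either way we reach a contradiction, so $\pi_1$ and $\pi_2$ must in fact be twist-equivalent.

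The main obstacle is the cubic-character case $\chi \neq 1$: the unified pole-counting closes only thanks to the incidental numerical fact $3 \nmid 4$, which forbids $\Pi$ from being $\chi$-monomial. Without this divisibility obstruction, the simple pole on the right-hand side could be matched by one on the left and no contradiction would arise.
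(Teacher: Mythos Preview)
Your argument is correct and takes a genuinely different route from the paper's. The paper argues place-by-place: after observing $\eta^3=1$, it compares the unramified Satake parameters $\{\alpha_{1,v},1,\alpha_{1,v}^{-1}\}=\{\alpha_{2,v}\eta_v,\eta_v,\alpha_{2,v}^{-1}\eta_v\}$ to force $\Ad(\pi_{1,v})\cong\Ad(\pi_{2,v})$ at almost all $v$, and then invokes Ramakrishnan's multiplicity-one theorem \cite[Theorem 4.1.2]{Ramakrishnan2000} to conclude. Your approach is global and stays entirely within the pole-counting philosophy of the rest of the paper: the factorization of $L(s,\Pi\times\ov\Pi\otimes\chi)$ via $\pi_i\boxtimes\ov{\pi_i}=\Ad(\pi_i)\boxplus 1$ is clean, and the contradiction in each case ($\chi=1$ versus $\chi$ nontrivial cubic) is sharp. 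One small point: your appeal to Lemma~\ref{base change and automporphic induction}(1) in the cubic case is slightly off as stated, since that lemma is phrased for representations on $\GL_{n\ell}$ and $4$ is not a multiple of $3$; the cleanest way to rule out $\Pi\cong\Pi\otimes\chi$ here is simply to compare central characters, which gives $\chi^4=1$, and combined with $\chi^3=1$ forces $\chi=1$. With that tweak your proof avoids both the local computation and the external multiplicity-one input, at the cost of requiring the cuspidality criterion for $\pi_1\boxtimes\ov{\pi_2}$ from Lemma~\ref{Rankin-Selberg product functorality}.
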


 \begin{proof}
        Suppose $\Ad(\pi_1) \cong \Ad(\pi_2) \otimes \eta$ for some character $\eta$ on $F$. Computing the central characters of both sides we have 
        $$1 = \omega_{\Ad(\pi_1)} = \omega_{\Ad(\pi_2) \otimes \eta} = \omega_{\Ad(\pi_2)} \eta^3 = \eta^3.$$
        If $\eta = 1$, then $\Ad(\pi_1) \cong \Ad(\pi_2)$, which would imply $\pi_1$ and $\pi_2$ are twist-equivalent by the multiplicity one theorem \cite[Theorem 4.1.2]{Ramakrishnan2000}. If $\eta \neq 1$, then for each finite place $v$ where $\pi_1$, $\pi_2$ and $\eta$ are unramified we have the following two sets of local parameters of $\Ad(\pi_1)$ and $\Ad(\pi_2) \otimes \eta$ at $v$
        $$\{\alpha_{1,v},1,\alpha_{1,v}^{-1}\} = \{\alpha_{2,v}\eta_v,\eta_v,\alpha_{2,v}^{-1}\eta_v\}$$
        are the same. Thus either $\eta_v = \alpha_{1,v}$ or $\eta_v = \alpha_{1,v}^{-1}$. By symmetry we may assume that $\eta_v = \alpha_{1,v}$. Thus we have 
        $$\{\alpha_{1,v}\eta_v,\eta_v,\alpha_{1,v}^{-1}\eta_v\} = \{\alpha_{1,v},1,\alpha_{1,v}^2\} = \{\alpha_{1,v},1,\alpha_{1,v}^{-1}\},$$
        which implies that 
        $$\Ad(\pi_{1,v}) \cong \Ad(\pi_{1,v}) \otimes \eta_v \cong \Ad(\pi_{2,v}) \otimes \eta_v.$$
        Thus we have 
        $$\Ad(\pi_{1,v}) \cong \Ad(\pi_{2,v})$$
        for all but finitely many $v$. By \cite[Theorem 4.1.2]{Ramakrishnan2000} this would also imply that $\pi_1$ and $\pi_2$ are twist equivalent.
\end{proof}

\subsection{Known Cases of Landau-Siegel Zeros}\label{Landau-Siegel zero known cases}
In this section we review some known cases where Landau-Siegel zeros can be eliminated. Our study of Landau-Siegel zeros of triple product $L$-functions reduces to them in some special cases. 

Let us first state results on standard $L$-functions. Below is a summary of \cite[Corollary 3.2]{HoffsteinRamakrishnan1995}, \cite[Theorem 3]{Luo2023}, \cite[Theorem C]{HoffsteinRamakrishnan1995} and \cite[Theorem 1]{Banks1997}:

\begin{lemma}\label{standard L-function Landau-Siegel zero}
    Let $\pi \in \cA_0(n,F)$. Then $L(s,\pi)$ has no Landau-Siegel zero in the following cases:
    \begin{enumerate}
        \item $\pi$ is non-self-dual.
        \item $n \geq 2$ and $\pi$ is monomial. 
        \item $n=2,3$.
    \end{enumerate}
\end{lemma}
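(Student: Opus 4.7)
The plan is to apply the auxiliary-$L$-function framework outlined in the introduction: for each case, construct an isobaric $\Pi$ such that $L(s,\Pi\times\bar\Pi)$ has nonnegative coefficients in the Dirichlet series of its logarithmic derivative, then balance its pole order at $s=1$ against the multiplicity with which $L(s,\pi)$ and $L(s,\bar\pi)$ appear in the factorization. The underlying tool is the standard zero-counting lemma: such an $L(s,\Pi\times\bar\Pi)$ admits at most $r$ real zeros, counted with multiplicity, in an interval $(1-c/\log C(\Pi),1)$, where $r$ is its pole order at $s=1$ and $c$ is absolute.

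For case~(1), I would take $\Pi=1\boxplus\pi\boxplus\bar\pi$. Since $n\geq 2$ forces $\pi\not\cong 1$ and non-self-duality forces $\pi\not\cong\bar\pi$, the three summands are pairwise non-isomorphic cuspidals, so $L(s,\Pi\times\bar\Pi)$ has pole of order exactly $3$ at $s=1$. Expanding the Rankin-Selberg product yields
\begin{equation*}
L(s,\Pi\times\bar\Pi)=\zeta_F(s)\,L(s,\pi)^2\,L(s,\bar\pi)^2\,L(s,\pi\times\bar\pi)^2\,L(s,\pi\times\pi)\,L(s,\bar\pi\times\bar\pi),
\end{equation*}
and non-self-duality makes the last two factors entire. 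A real zero $\beta$ of $L(s,\pi)$ is automatically a zero of $L(s,\bar\pi)$, producing a zero of $L(s,\Pi\times\bar\Pi)$ of order at least $4>3$ at $\beta$, in contradiction with the zero-counting lemma.

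For case~(2), use Arthur-Clozel automorphic induction (Lemma~\ref{base change and automporphic induction}): monomiality yields a cyclic extension $K/F$ of prime degree $\ell$ and $\alpha\in\cA_0(n/\ell,K)$ with $\pi=I_K^F(\alpha)$, so $L(s,\pi)=L(s,\alpha)$. Then induct on $n$, invoking case~(1) over $K$ when $\alpha$ is non-self-dual, case~(3) over $K$ when $\alpha$ is self-dual of rank at most $3$, and recursing on monomial sub-cases; the induction terminates at Hecke characters on $\GL_1(\bA_K)$, where non-self-dual sub-cases are classical and quadratic sub-cases exploit the discriminant factor absorbed into $C(\pi)$. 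For case~(3) with $n=2$, apply the Goldfeld-Hoffstein-Lieman argument to $\Pi=1\boxplus\Ad(\pi)$, using $L(s,\pi\times\bar\pi)=\zeta_F(s)L(s,\Ad(\pi))$ from Lemma~\ref{symmetric power functorality}(1) to transfer any Siegel zero of $L(s,\pi)$ to one of $L(s,\Ad(\pi))$; the dihedral sub-case reduces to case~(2). For $n=3$, split into self-dual (handled by Banks's direct argument using $\Pi=1\boxplus\pi$ together with the isobaric decomposition of $L(s,\pi\times\pi)$ into lower-degree lifts) and non-self-dual (which is case~(1)).

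The main obstacle is uniform conductor control. The zero-counting lemma yields a Siegel constant in terms of $\log C(\Pi)$, so one requires $\log C(\Pi)\ll\log C(\pi)$ with an absolute implied constant. Case~(1) is clean because $C(\Pi)\asymp C(\pi)^2$; case~(3) is manageable because the symmetric-square and Rankin-Selberg lifts (Lemmas~\ref{symmetric power functorality} and~\ref{Rankin-Selberg product functorality}) carry polynomial conductor bounds. Case~(2) is genuinely delicate, since $C(I_K^F(\alpha))$ depends on $\mathrm{disc}(K/F)$ in a way that can either aid or obstruct the recursion, so the inductive step must separate the regime where the discriminant saving in $C(\pi)/C_K(\alpha)$ is small from the generic regime where it dominates.
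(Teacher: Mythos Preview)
The paper does not actually prove this lemma; it is recorded as a summary of results from the cited literature (\cite{HoffsteinRamakrishnan1995}, \cite{Luo2023}, \cite{Banks1997}), so there is no in-paper argument to compare against. Your construction for case~(1) is the standard one and is correct.

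Your sketches for cases~(2) and~(3), however, contain genuine gaps. In case~(2), the induction on $n$ via $\pi=I_K^F(\alpha)$ need not terminate: after one descent, $\alpha\in\cA_0(n/\ell,K)$ can perfectly well be self-dual, non-monomial, and of degree $\geq 4$ (e.g.\ $\alpha\cong\Sym^3(\sigma)$ for a self-dual $\sigma\in\cA_0(2,K)$ not of solvable polyhedral type), and then none of your three branches applies. The actual argument is direct and sidesteps the conductor issue you raise: assuming $\pi$ self-dual (else case~(1)), set $\Pi=\bar\pi\boxplus 1\boxplus\eta$ where $\pi\cong\pi\otimes\eta$; the monomial relation collapses $L(s,\pi\otimes\eta)$ and $L(s,\bar\pi\otimes\bar\eta)$ into extra copies of $L(s,\pi)$ and $L(s,\bar\pi)$, giving multiplicity $4$ against pole order $3$. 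This is precisely the specialization $\pi_2=1$ of the paper's own Lemma~\ref{new Rankin-Selberg Landau-Siegel zero}(2). In case~(3) with $n=2$, the claimed transfer of a Siegel zero from $L(s,\pi)$ to $L(s,\Ad(\pi))$ via the identity $L(s,\pi\times\bar\pi)=\zeta_F(s)L(s,\Ad(\pi))$ does not exist: a real zero of $L(s,\pi)$ is not in general a zero of $L(s,\pi\times\bar\pi)$. The Hoffstein--Ramakrishnan argument instead places $\pi$ itself in the isobaric sum (e.g.\ $\Pi=1\boxplus\pi\boxplus\Ad(\pi)$ for non-dihedral self-dual $\pi$, which forces $\omega_\pi=1$) and uses $\pi\boxtimes\Ad(\pi)=\pi\boxplus A^3(\pi)$ to produce $L(s,\pi)^4$ against pole order $3$.
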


We then state results on Rankin-Selberg $L$-functions. Below is a summary of \cite[Theorem A]{RamakrishnanWang2003},\cite[Theorem 1]{Luo2023} and \cite[Theorem 2]{Luo2023}:

\begin{lemma}\label{Rankin Selberg Landau-Siegel zero}
    Let $\pi_1,\pi_2 \in \cA_0(2,F)$. Let $\pi \in \cA_0(3,F)$. Then we have 
    \begin{enumerate}
        \item $L(s,\pi_1 \times \pi_2)$ has no Landau-Siegel zero except possibly in the following cases:
        \begin{enumerate}
            \item $\pi_1,\pi_2$ are non-dihedral, and $\pi_1$ is twist-equivalent to $\pi_2$.
            \item $\pi_1$ and $\pi_2$ are both dihedral, and $\pi_1 = I_K^F(\chi_1), \pi_2 = I_K^F(\chi_2)$ for characters $\chi_1,\chi_2$ on a single quadratic extension $K/F$.
        \end{enumerate}
        \item $L(s,\pi_1 \times \pi)$ has no Landau-Siegel zero except possibly in the case when $\pi_1$ is non-dihedral, and $\pi$ is twist-equivalent to $\Ad(\pi_1)$.
    \end{enumerate}
\end{lemma}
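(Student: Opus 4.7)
My plan is to follow the Hoffstein--Lockhart template refined by Ramakrishnan--Wang and Luo: for each non-exceptional configuration, construct an isobaric automorphic representation $\Pi$ such that the Rankin--Selberg square $L(s,\Pi\times\ov{\Pi})$ has a Dirichlet series with non-negative coefficients, a controlled pole order $r$ at $s=1$, and an Euler factorization in which $L(s,\pi_1\times\pi_2)$ (resp.\ $L(s,\pi_1\times\pi)$) together with its dual appears with total multiplicity $a+b>r$, the remaining residual factor being a product of $L$-functions already known to be free of Landau--Siegel zeros. A standard positivity argument then shows that a hypothetical Landau--Siegel zero of the target $L$-function would force $L(s,\Pi\times\ov{\Pi})$ to vanish to an order strictly exceeding $r$ near $s=1$, contradicting its pole structure.

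For part (1), under the hypothesis that $\pi_1,\pi_2$ are non-dihedral and not twist-equivalent, a natural choice is
\[
\Pi \;=\; 1 \,\boxplus\, \pi_1 \,\boxplus\, \ov{\pi_2} \,\boxplus\, (\pi_1\boxtimes\pi_2),
\]
which is a genuine isobaric representation by Lemma~\ref{Rankin-Selberg product functorality}~(1), since $\pi_1\boxtimes\pi_2 \in \cA_0(4,F)$ under these hypotheses. Expanding $L(s,\Pi\times\ov{\Pi})$ and repeatedly applying Lemma~\ref{Rankin-Selberg product functorality} to the nested Rankin--Selberg products, one obtains a factorization whose residual factors are products of $\zeta_F(s)$, standard $L$-functions on $\GL(n)$ with $n\leq 3$, and $L(s,\Ad(\pi_i))$. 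All of these are free of Landau--Siegel zeros by Lemma~\ref{standard L-function Landau-Siegel zero} together with the Goldfeld--Hoffstein--Lieman theorem on $\Ad(\pi_i)$. A direct count then shows that $L(s,\pi_1\times\pi_2)$ and its dual appear with multiplicity strictly larger than the order of pole of $L(s,\Pi\times\ov{\Pi})$ at $s=1$. When exactly one of $\pi_1,\pi_2$ is dihedral, Lemma~\ref{base change and automporphic induction}~(3) reduces the problem to a $\GL(2)\times\GL(2)$ Rankin--Selberg $L$-function over a quadratic extension $K/F$, where either the non-dihedral case applies or one lands precisely in the exceptional configuration (1)(b). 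When both are dihedral from different quadratic extensions, base-changing to the compositum again places us in the non-dihedral setting.

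Part (2) proceeds along the same lines, this time taking $\Pi$ built from $\pi_1\boxtimes\pi \in \cA_0(6,F)$, whose cuspidality under the hypothesis that $\pi$ is not twist-equivalent to $\Ad(\pi_1)$ is guaranteed by Lemma~\ref{Rankin-Selberg product functorality}~(2). Expanding $L(s,\Pi\times\ov{\Pi})$ and reducing via functoriality, each residual factor is handled by Lemma~\ref{standard L-function Landau-Siegel zero}, the Goldfeld--Hoffstein--Lieman theorem applied to $\Ad(\pi_1)$, and the entirety of $L(s,\pi_1\times\pi)$ (from the cuspidality of $\pi_1\boxtimes\pi$). The dihedral case for $\pi_1$ is reduced via base change to a $\GL(2)\times\GL(3)$ Rankin--Selberg $L$-function over a quadratic extension, where the non-dihedral case applies.

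The main obstacle is the combinatorial bookkeeping of the isobaric decomposition of the iterated Rankin--Selberg products that appear when one expands $L(s,\Pi\times\ov{\Pi})$. One must verify case-by-case that (i) no unexpected $\zeta_F(s)$ factor arises from a coincidental cuspidal isomorphism among the summands, which would inflate the pole order at $s=1$, and (ii) the target $L$-function appears with strictly larger multiplicity in the factorization. The exceptional cases listed in the statement are precisely those configurations where these two quantities coincide, so the dividing line is sharp and intrinsic to the method rather than an artifact of the specific construction of $\Pi$.
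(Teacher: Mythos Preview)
The paper does not prove this lemma; it is recorded as a summary of \cite[Theorem~A]{RamakrishnanWang2003} and \cite[Theorems~1,~2]{Luo2023}, so there is no in-paper proof to compare against. Your proposal is an attempt to reconstruct those arguments, and the overall template is correct. However, your specific construction for Part~(1) has a genuine gap.

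With $\Pi = 1 \boxplus \pi_1 \boxplus \ov{\pi_2} \boxplus (\pi_1\boxtimes\pi_2)$ you have four distinct cuspidal summands of multiplicity one, so $e=\sum e_j^2=4$. Expanding $L(s,\Pi\times\ov\Pi)$, the factor $L(s,\pi_1\times\pi_2)$ and its dual each occur exactly twice (from the pairs $(1,\ov{\pi_1\boxtimes\pi_2})$, $(\pi_1,\pi_2)$ and their conjugates), giving $a+b=4=e$, not $a+b>e$. Lemma~\ref{Landau-Siegel zero lemma} therefore does not apply. Moreover, your description of the residual is inaccurate: the cross terms $\pi_1\times(\ov{\pi_1}\boxtimes\ov{\pi_2})$ and $\ov{\pi_2}\times(\ov{\pi_1}\boxtimes\ov{\pi_2})$ produce factors of the form $L(s,\Ad(\pi_i)\times\ov{\pi_j})$, which are $\GL(3)\times\GL(2)$ Rankin--Selberg $L$-functions, not standard $L$-functions on $\GL(n)$ with $n\le 3$. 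You cannot dispose of those using Lemma~\ref{standard L-function Landau-Siegel zero}, and appealing to Part~(2) of the present lemma would be circular.

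The fix is exactly the trick the paper itself uses in Section~\ref{Section 3} for triple products: insert an adjoint-lift summand so that the cross term $\pi_1\times(\Ad(\pi_1)\boxtimes\pi_2)$ decomposes via $\pi_1\boxtimes\Ad(\pi_1)=\pi_1\boxplus A^3(\pi_1)$ and spits out an \emph{extra} copy of $L(s,\pi_1\times\pi_2)$. Concretely, taking $\Pi=\pi_1\boxplus\ov{\pi_2}\boxplus(\Ad(\pi_1)\boxtimes\ov{\pi_2})$ gives three distinct cuspidal summands ($e=3$) and $a+b=4>3$, with residual factor $L(s,A^3(\pi_1)\times\pi_2)$ analytic in $(t,1)$. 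The same mechanism is what makes your Part~(2) sketch work, and it is the device Ramakrishnan--Wang and Luo actually employ.
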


All the results stated above rely on the following lemma, which is a combination of \cite[Lemma a]{HoffsteinRamakrishnan1995} and \cite[Appendix, Lemma]{HoffsteinLockhart1994}.

\begin{lemma}\label{Landau-Siegel zero lemma}
    Let $\Pi \in \cA(n,F)$. Write $\Pi = \boxplus_{j=1}^J e_j\pi_j$ as an isobaric sum of cuspidal representations, where $\pi_i$ is not isomorphic to $\pi_j$ when $i \neq j$, and $e_j \geq 1$. Then there exists an effective constant $c = c(n,e_j)$ depending only on $n$ and the $e_j$'s, such that $L(s,\Pi \times \ov{\Pi})$ has at most $e = \sum_{j=1}^J e_j^2$ many Landau-Siegel zeros relative to $c$. Moreover, if $L(s,\Pi \times \ov{\Pi})$ decomposes as 
    $$L(s,\Pi \times \ov{\Pi}) = L_1(s)^a \cdot L_2(s)^b \cdot L_{Res}(s),$$
    where $a+b > e$, $L_1(s)$ shares the same real zeros with $L_2(s)$, and $L_{Res}(s)$ is analytic in some interval $(t,1)$, $t<1$, then $L_1(s)$ has no Landau-Siegel zero.
\end{lemma}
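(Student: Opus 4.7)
The plan is to exploit three structural features of $L(s,\Pi \times \ov{\Pi})$: non-negativity of its Dirichlet coefficients, an explicit pole of order $e$ at $s=1$, and the standard functional equation. From these, a Hadamard-factorization argument in the spirit of Goldfeld-Hoffstein-Lieman will bound the number of Landau-Siegel zeros; the second assertion then follows by a short multiplicity count.

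First I would record the factorization
$$L(s,\Pi \times \ov{\Pi}) = \prod_{i,j=1}^J L(s,\pi_i \times \ov{\pi_j})^{e_i e_j}.$$
By Jacquet-Piatetski-Shapiro-Shalika, each factor $L(s,\pi_i \times \ov{\pi_j})$ is entire except for a simple pole at $s=1$ precisely when $\pi_i \cong \pi_j$, i.e.\ when $i = j$. Thus $L(s,\Pi \times \ov{\Pi})$ has a pole of exact order $e = \sum_j e_j^2$ at $s=1$. Moreover, writing $L(s,\Pi \times \ov{\Pi}) = \sum_n a_n n^{-s}$ in the region of absolute convergence, each coefficient $a_n$ is non-negative, since locally the coefficients are sums of squared absolute values of traces of symmetric powers of the Satake parameters.

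The main step is a uniform bound on the number of real zeros of $L(s,\Pi \times \ov{\Pi})$ near $s=1$. The entire function $(s-1)^e \cdot L(s,\Pi \times \ov{\Pi})$ has order $1$ by the functional equation and Stirling, so it admits a Hadamard product expansion over its zeros $\rho$. Differentiating logarithmically and testing at $s = 1 + \delta$ with $\delta = c/\log C(\Pi \times \ov{\Pi})$, the non-negativity of $-L'/L$ on the positive real axis gives
$$\frac{e}{\delta} \geq \sum_\rho \left(\frac{1}{1+\delta - \rho} + \frac{1}{\rho}\right) + O(\log C(\Pi \times \ov{\Pi})),$$
where the error absorbs the archimedean $\Gamma$-factor contribution and the Hadamard constant. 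Each real zero $\beta \in (1-\delta,1)$ contributes a term of size $\gg 1/\delta$, while pairs of complex conjugate zeros contribute non-negatively after regrouping. Choosing $c$ sufficiently small in terms of $n$ and the $e_j$'s forces the number of real zeros of $L(s,\Pi \times \ov{\Pi})$ in $(1-\delta,1)$, counted with multiplicity, to be at most $e$. This Landau/de la Vall\'ee-Poussin type inequality is the technical heart of the argument and the main obstacle, and is essentially \cite[Lemma a]{HoffsteinRamakrishnan1995} together with the appendix of \cite{HoffsteinLockhart1994}.

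For the second assertion, assume for contradiction that $L_1(s)$ has a Landau-Siegel zero $\beta$ relative to $c$. Since $L_1$ and $L_2$ share their real zeros, $\beta$ is also a zero of $L_2(s)$. From the decomposition $L(s,\Pi \times \ov{\Pi}) = L_1(s)^a L_2(s)^b L_{Res}(s)$ with $L_{Res}$ analytic on $(t,1)$, shrinking $c$ if necessary so that $\beta \in (t,1)$, the order of vanishing of $L(s,\Pi \times \ov{\Pi})$ at $\beta$ is at least $a+b$. Since $a+b > e$, this single point alone exceeds the bound on the total multiplicity of Landau-Siegel zeros obtained in the previous step, a contradiction. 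Hence $L_1(s)$ has no Landau-Siegel zero relative to $c$.
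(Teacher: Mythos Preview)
The paper does not give its own proof of this lemma; it simply states it as a combination of \cite[Lemma a]{HoffsteinRamakrishnan1995} and \cite[Appendix, Lemma]{HoffsteinLockhart1994}. Your proposal is a correct sketch of exactly that cited argument---the non-negativity of the Dirichlet coefficients of $L(s,\Pi\times\ov\Pi)$, the pole of exact order $e=\sum_j e_j^2$ at $s=1$, and the Hadamard/logarithmic-derivative inequality \`a la Goldfeld--Hoffstein--Lieman---and you invoke the same two references at the key step. The multiplicity count for the second assertion is also the intended one. So your approach coincides with what the paper appeals to.
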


At the end of this section let us prove the following simple lemma that will be used later:

\begin{lemma}\label{new Rankin-Selberg Landau-Siegel zero}
    Let $\pi_1 \in \cA_0(n,F)$ and $\pi_2 \in \cA_0(m,F)$. Then $L(s,\pi_1 \times \pi_2)$ has no Landau-Siegel zero if one of the following happens:
    \begin{enumerate}
        \item $\pi_1$ is self-dual, and $\pi_2$ is non-self-dual. 
        \item $\pi_1$ is monomial with $\pi_1 \cong \pi_1 \otimes \eta$ for a non-trivial character $\eta$ on $F$, and $\pi_2$ is not isomorphic to $\pi_2 \otimes \eta$.
    \end{enumerate}
\end{lemma}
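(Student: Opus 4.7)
The plan is to apply Lemma \ref{Landau-Siegel zero lemma} in each case, with an auxiliary isobaric representation $\Pi = \pi_1 \boxplus \sigma \boxplus \tau$ whose three cuspidal summands are pairwise distinct (giving $e = 3$), and chosen so that the hypothesis on $\pi_1$ forces two Rankin-Selberg $L$-functions sharing their real zeros with $L(s,\pi_1 \times \pi_2)$ via complex conjugation to appear in the nine-factor expansion of $L(s, \Pi \times \ov{\Pi})$ with combined multiplicity $a + b \geq 4 > 3 = e$.

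For part (1), I would take $\Pi = \pi_1 \boxplus \pi_2 \boxplus \ov{\pi_2}$. The three summands are pairwise distinct: $\pi_2 \not\cong \ov{\pi_2}$ by non-self-duality, and either $\pi_1 \cong \pi_2$ or $\pi_1 \cong \ov{\pi_2}$ would, via $\pi_1 \cong \ov{\pi_1}$, force $\pi_2 \cong \ov{\pi_2}$. Using self-duality of $\pi_1$, one finds that $L(s, \pi_1 \times \pi_2)$ and $L(s, \pi_1 \times \ov{\pi_2})$ each appear with multiplicity $2$ in $L(s, \Pi \times \ov{\Pi})$; these two $L$-functions share their real zeros because $L(s, \pi_1 \times \ov{\pi_2}) = L(s, \ov{\pi_1} \times \ov{\pi_2})$ and, for real $\beta$, $L(\beta, \ov{\pi_1} \times \ov{\pi_2}) = \ov{L(\beta, \pi_1 \times \pi_2)}$. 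The residual factor
\begin{equation*}
    L(s, \pi_1 \times \ov{\pi_1}) \cdot L(s, \pi_2 \times \ov{\pi_2})^2 \cdot L(s, \pi_2 \times \pi_2) \cdot L(s, \ov{\pi_2} \times \ov{\pi_2})
\end{equation*}
is analytic on any interval $(t, 1)$, $t<1$: the last two are entire by the non-self-duality of $\pi_2$, and the first two are meromorphic with poles only at $s = 1$.

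For part (2), I would take $\Pi = \pi_1 \boxplus \ov{\pi_2} \boxplus (\ov{\pi_2} \otimes \eta^{-1})$. The hypothesis $\pi_2 \not\cong \pi_2 \otimes \eta$, dualized to $\ov{\pi_2} \not\cong \ov{\pi_2} \otimes \eta^{-1}$, makes the last two summands distinct, while the derived identity $\ov{\pi_1} \cong \ov{\pi_1} \otimes \eta^{-1}$ forces any coincidence $\pi_1 \cong \ov{\pi_2}$ or $\pi_1 \cong \ov{\pi_2} \otimes \eta^{-1}$ to imply $\pi_2 \cong \pi_2 \otimes \eta$, contradicting the hypothesis. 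Thus $e = 3$. The twist identity $L(s, A \times (B \otimes \chi)) = L(s, (A \otimes \chi) \times B)$ combined with $\pi_1 \otimes \eta \cong \pi_1$ yields $L(s, \pi_1 \times (\pi_2 \otimes \eta)) = L(s, \pi_1 \times \pi_2)$, so $L(s, \pi_1 \times \pi_2)$ appears with multiplicity $2$ in $L(s, \Pi \times \ov{\Pi})$ and, by the symmetric argument, $L(s, \ov{\pi_1} \times \ov{\pi_2})$ with multiplicity $2$; these share real zeros by complex conjugation. The twisted factors $L(s, \pi_2 \times \ov{\pi_2} \otimes \eta^{\pm 1})$ appearing in the residual are entire since $\pi_2 \not\cong \pi_2 \otimes \eta^{\pm 1}$, so the whole residual is analytic on $(t, 1)$.

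The main point requiring care is the distinctness check in part (2) when $\pi_1$ and $\pi_2$ have equal degrees, which relies on the full derived identity $\ov{\pi_1} \cong \ov{\pi_1} \otimes \eta^{-1}$; I would also want to be careful about the manipulation of twists inside the $L$-factors using local parameters. Once distinctness is verified, the conclusion follows directly from Lemma \ref{Landau-Siegel zero lemma} with $a + b = 4 > 3 = e$ in both cases.
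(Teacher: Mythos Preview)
Your proposal is correct and follows essentially the same approach as the paper. For part (1) your $\Pi$ is identical to the paper's; for part (2) your $\Pi = \pi_1 \boxplus \ov{\pi_2} \boxplus (\ov{\pi_2}\otimes\eta^{-1})$ is precisely the dual of the paper's $\Pi = \ov{\pi_1} \boxplus \pi_2 \boxplus (\pi_2\otimes\eta)$, so $L(s,\Pi\times\ov{\Pi})$ is the same $L$-function and the decomposition you describe matches the paper's verbatim.
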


\begin{proof}
    \begin{enumerate}
        \item Since $\pi_1$ is self-dual and $\pi_2$ is not, no two of the three representations $\pi_1,\pi_2, \ov{\pi_2}$ are isomorphic. Set $\Pi = \pi_1 \boxplus \pi_2 \boxplus \ov{\pi_2}$. Note that $L(s,\Pi \times \ov{\Pi})$ decomposes as 
        \begin{align*}
            L(s,\Pi \times \ov{\Pi}) &= L(s,\pi_1 \times \pi_2)^2 \cdot L(s,\pi_1 \times \ov{\pi_2})^2 \\
            &\cdot L(s,\pi_1 \times \pi_1) \cdot L(s,\pi_2 \times \pi_2) \cdot L(s,\ov{\pi_2} \times \ov{\pi_2}) \cdot L(s,\pi_2 \times \ov{\pi_2})^2.
        \end{align*}
        Since $\pi_1$ is self-dual, $L(s,\pi_1 \times \pi_2)$ and $L(s,\pi_1 \times \ov{\pi_2})$ have the same real zeros. Thus $L(s,\pi_1 \times \pi_2)$ has no Landau-Siegel zero by Lemma \ref{Landau-Siegel zero lemma}.
        \item Since $\pi_1 \cong \pi_1 \otimes \eta$ and $\pi_2$ is not isomorphic to $\pi_2 \otimes \eta$, no two of $\ov{\pi_1},\pi_2,\pi_2 \otimes \eta$ are isomorphic. Set $\Pi = \ov{\pi_1} \boxplus \pi_2 \boxplus (\pi_2 \otimes \eta)$. In view of the relation $\pi_1 \cong \pi_1 \otimes \eta$, $L(s,\Pi \times \ov{\Pi})$ decomposes as 
        \begin{align*}
            L(s,\Pi \times \ov{\Pi}) &= L(s,\pi_1 \times \pi_2)^2 \cdot L(s,\ov{\pi_1} \times \ov{\pi_2})^2 \\
            &\cdot L(s,\pi_1 \times \ov{\pi_1}) \cdot L(s,\pi_2 \times \ov{\pi_2})^2 \cdot L(\pi_2 \times \ov{\pi_2} \otimes \eta) \cdot L(s,\pi_2 \times \ov{\pi_2} \otimes \eta^{-1}).
        \end{align*}
        Thus $L(s,\pi_1 \times \pi_2)$ has no Landau-Siegel zero by Lemma \ref{Landau-Siegel zero lemma}.
    \end{enumerate}
\end{proof}

\section{Proof of Theorem A}\label{Section 3}

The goal of this section is to prove the following refined version of Theorem \ref{Theorem A}:

\begin{theorem}\label{Theorem A, refined version}
    Let $\pi_1,\pi_2,\pi_3 \in \cA_0(2,F)$ with central characters $\omega_1,\omega_2,\omega_3$ respectively. Then $L(s,\pi_1 \times \pi_2 \times \pi_3)$ has no Landau-Siegel zero except possibly in the following cases:
    \begin{enumerate}
            \item There exists a quadratic extension $K/F$ such that $\pi_i = I_K^F(\chi_i)$ for some character $\chi_i$ on $K$, for each $i=1,2,3$. In this case there are at most two Landau-Siegel zeros coming from    $$L(s,\chi_1\chi_2\chi_3)L(s,\chi_1\chi_2^\theta\chi_3)L(s,\chi_1\chi_2\chi_3^\theta)L(s,\chi_1\chi_2^\theta\chi_3^\theta).$$
            Here $\theta = \theta_{K/F}$ is the non-trivial element in Gal$(K/F)$, and wet set $\chi_i^\theta := \chi_i \circ \theta$. Moreover, there is at most one Landau-Siegel zero if $(\chi_2\chi_3)^{\theta} \neq \chi_2\chi_3$.
            \item There exists a quadratic extension $K/F$ such that one of $\pi_1,\pi_2,\pi_3$, say $\pi_1$, is induced from some character $\chi_1$ on $K$, and $B_F^K(\pi_2) \otimes \chi_1$ and $B_F^K(\pi_3)$ are both cuspidal and dihedral, induced form characters $\xi_2,\xi_3$ on $L$ respectively, where $L/K$ is the quadratic extension associated with the quadratic character $\chi_1^\theta\chi_1^{-1}$ on $K$. In this case there is at most one Landau-Siegel zero coming from 
            $$L(s,\xi_3\xi_2)L(s,\xi_3\xi_2^{\theta_{L/K}}).$$
            \item $\pi_1,\pi_2,\pi_3$ are not of solvable polyhedral type, and are all twist-equivalent. Moreover, suppose $\pi_2 \cong \pi_1 \otimes \eta$ and $\pi_3 \cong \pi_1 \otimes \mu$ for characters $\eta$ and $\mu$ on $F$. Then there is no Landau-Siegel zero if the twisted symmetric fifth power $L$-function
            $$L(s,\pi_1;\Sym^5 \otimes \omega_1^{-1}\eta\mu)$$
            has no pole in some interval $(t,1)$, $t<1$.
        \end{enumerate}
\end{theorem}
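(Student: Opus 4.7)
Proof Plan:

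The overall strategy is to split into cases according to the classification of $(\pi_1,\pi_2,\pi_3)$ and apply the Hoffstein--Lockhart machinery of Lemma \ref{Landau-Siegel zero lemma} to a carefully chosen isobaric $\Pi$, arranging matters so that $L(s,\pi_1\times\pi_2\times\pi_3)$ and its complex conjugate (which share real zeros) appear in $L(s,\Pi\times\ov{\Pi})$ with total multiplicity strictly exceeding the bound $e=\sum e_j^2$.

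For the general-type case I would build $\Pi$ out of $\pi_1\boxtimes\pi_2$ (cuspidal on $\GL(4)$ by Lemma \ref{Rankin-Selberg product functorality}(1)), $\pi_3$, and their duals, with the exact composition depending on whether $\pi_3$ and $\pi_1\boxtimes\pi_2$ are self-dual. The residual piece of $L(s,\Pi\times\ov{\Pi})$ then splits into standard $L$-functions, $L(s,\pi_1\times\pi_2)$, and $L(s,\Ad\pi_1\times\Ad\pi_2)$-type pieces that are non-vanishing near $s=1$ by Lemmas \ref{standard L-function Landau-Siegel zero}, \ref{Rankin Selberg Landau-Siegel zero}(1) (since $\pi_1,\pi_2$ are non-dihedral and not twist-equivalent), and \ref{new Rankin-Selberg Landau-Siegel zero}. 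When exactly two of the $\pi_i$ are twist-equivalent, say $\pi_2\cong\pi_1\otimes\eta$ with $\pi_3$ not twist-equivalent to $\pi_1$, the identity $\pi_1\boxtimes\pi_2 = (\Sym^2\pi_1\otimes\eta)\boxplus(\omega_1\eta)$ gives
$$L(s,\pi_1\times\pi_2\times\pi_3) = L(s,\Sym^2\pi_1\otimes\eta\times\pi_3)\cdot L(s,\pi_3\otimes\omega_1\eta),$$
after which Lemmas \ref{Rankin Selberg Landau-Siegel zero}(2) (with Lemma \ref{multiplicity one} forbidding twist-equivalence of $\pi_3$ with $\Ad\pi_1$) and \ref{standard L-function Landau-Siegel zero}(3) finish off each factor.

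In the all-twist-equivalent case, writing $\pi_2=\pi_1\otimes\eta$ and $\pi_3=\pi_1\otimes\mu$, the decomposition $\pi_1^{\otimes 3} = \Sym^3\pi_1\boxplus 2(\pi_1\otimes\omega_1)$ yields
$$L(s,\pi_1\times\pi_2\times\pi_3) = L(s,\Sym^3\pi_1\otimes\eta\mu)\cdot L(s,\pi_1\otimes\omega_1\eta\mu)^2.$$
If $\pi_1$ is tetrahedral or octahedral, $\Sym^3\pi_1$ splits into $\GL(2)$ pieces by Lemma \ref{symmetric power functorality}(2) and Lemma \ref{standard L-function Landau-Siegel zero}(3) applies. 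Otherwise I would assemble $\Pi$ from $\Sym^3\pi_1\otimes\eta\mu$ and its adjoint relatives so that the residual factor of $L(s,\Pi\times\ov{\Pi})$ carries $L(s,\pi_1;\Sym^5\otimes\omega_1^{-1}\eta\mu)$, producing the stated conditional conclusion. For the dihedral case, say $\pi_1=I_K^F(\chi_1)$, Lemma \ref{base change and automporphic induction}(3) gives
$$L(s,\pi_1\times\pi_2\times\pi_3)=L(s,B_F^K(\pi_2)\times B_F^K(\pi_3)\otimes\chi_1),$$
a $\GL(2)\times\GL(2)$ Rankin--Selberg over $K$ governed by Lemma \ref{Rankin Selberg Landau-Siegel zero}(1). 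The enumerated sub-cases (1) and (2) arise from the cuspidality/dihedral criteria of Lemma \ref{base change and automporphic induction}(4) applied to $B_F^K(\pi_2)$ and $B_F^K(\pi_3)$; the refinement that at most one zero survives when $(\chi_2\chi_3)^\theta\neq\chi_2\chi_3$ (and in case (2)) comes from revisiting the auxiliary $\Pi$ over $K$ so as to exploit the extra symmetries $B_F^K(\pi_i)\cong B_F^K(\pi_i)\circ\theta$ and the built-in complex-conjugate structure of triple products.

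The main obstacle is case (3): constructing the auxiliary $\Pi$ so that the residual non-vanishing genuinely reduces to analyticity of $L(s,\pi_1;\Sym^5\otimes\omega_1^{-1}\eta\mu)$ on an interval $(t,1)$ is essentially forced, which is why the conclusion there is only conditional. A secondary challenge is arranging $\Pi$ in the general-type and dihedral cases so the multiplicity of $L(s,\pi_1\times\pi_2\times\pi_3)$ in $L(s,\Pi\times\ov{\Pi})$ genuinely surpasses $e$, which in some sub-cases forces inclusion of multiple twisted/dualized copies of the main building blocks and careful verification of every residual factor via the background results of Section \ref{Landau-Siegel zero known cases}.
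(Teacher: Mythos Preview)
Your handling of the twist-equivalent case is essentially the paper's argument (one small slip: in the octahedral sub-case $\Sym^3\pi_1$ does \emph{not} split---it is cuspidal monomial, and the paper invokes Lemma~\ref{standard L-function Landau-Siegel zero}(2) rather than~(3)). The conditional $\Sym^5$ argument in case~(3) is also on target.

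The genuine gap is in the general-type case. An isobaric $\Pi$ built only from $\pi_1\boxtimes\pi_2$, $\pi_3$, and their duals cannot push the multiplicity of the triple product and its conjugate past $e=\sum e_j^2$: for $\Pi=(\pi_1\boxtimes\pi_2)\boxplus\ov{\pi_3}$ one gets multiplicity $1+1$ against $e=2$; adding the duals of both summands gives $2+2$ against $e=4$; and so on. (Your appeal to non-vanishing of the residual factor is also misplaced: Lemma~\ref{Landau-Siegel zero lemma} needs only holomorphy of $L_{\mathrm{Res}}$ on $(t,1)$, and in any event non-vanishing alone would not close the gap when $a+b\le e$.) The paper's key idea is to take
\[
\Pi=(\pi_1\boxtimes\pi_2)\boxplus\ov{\pi_3}\boxplus(\Ad(\pi_1)\boxtimes\ov{\pi_3}),
\]
three distinct cuspidal summands so $e=3$. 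The cross term $L(s,(\pi_1\boxtimes\pi_2)\times(\Ad(\pi_1)\boxtimes\pi_3))$ is then rewritten via the Clebsch--Gordan identity $\pi_1\boxtimes\Ad(\pi_1)=\pi_1\boxplus A^3(\pi_1)$ as
\[
L(s,\pi_1\times\pi_2\times\pi_3)\cdot L(s,A^3(\pi_1)\times(\pi_2\boxtimes\pi_3)),
\]
producing a hidden second copy of the triple product (and, by conjugation, of its dual). This gives $a+b=4>3=e$, and Lemma~\ref{Landau-Siegel zero lemma} applies. The residual factor $L(s,A^3(\pi_1)\times(\pi_2\boxtimes\pi_3))$ is entire, which is all that is needed.

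A secondary point in the dihedral case: reducing to Lemma~\ref{Rankin Selberg Landau-Siegel zero}(1) over $K$ does not by itself yield the refinements in cases~(1) and~(2) of the theorem. When $B_F^K(\pi_2)$ and $B_F^K(\pi_3)$ are both cuspidal and, say, $\pi_1\boxtimes\pi_3$ is cuspidal, the paper instead observes directly over $F$ that $\pi_1\boxtimes\pi_3$ is monomial (fixed by $\otimes\,\eta_{K/F}$) while $\pi_2\not\cong\pi_2\otimes\eta_{K/F}$, and applies Lemma~\ref{new Rankin-Selberg Landau-Siegel zero}(2) to $L(s,(\pi_1\boxtimes\pi_3)\times\pi_2)$. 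This triple-product-specific monomial trick is what eliminates the Landau--Siegel zero in the sub-case that would otherwise be left open by Ramakrishnan--Wang over $K$.
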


\begin{remark}
    The condition in Theorem \ref{Theorem A, refined version} (3) is satisfied if $F$ is totally real, and $\pi_1$ is regular algebraic by Lemma \ref{symmetric power functorality} (4). Thus there is no Landau-Siegel zero in this case.
\end{remark}

\subsection{General Type}
Assume $(\pi_1,\pi_2,\pi_3)$ is of general type. Recall that this means $\pi_1,\pi_2,\pi_3$ are all non-dihedral, and $\pi_i$ is not twist-equivalent to $\pi_j$ for all $i \neq j$. We set
$$\Pi := (\pi_1 \boxtimes \pi_2) \boxplus \ov{\pi_3} \boxplus (\Ad(\pi_1) \boxtimes \ov{\pi_3}).$$
Now since $\pi_1,\pi_2$ are non-dihedral and are not twist-equivalent, $\pi_1 \boxtimes \pi_2$ is cuspidal by Lemma \ref{Rankin-Selberg product functorality} (1). We then claim that $\Ad(\pi_1) \boxtimes \ov{\pi_3}$ is also cuspidal. Suppose not, then by Lemma \ref{Rankin-Selberg product functorality} (2), $\Ad(\pi_1)$ is twist-equivalent to $\Ad(\ov{\pi_3}) = \Ad(\pi_3)$. By Lemma \ref{multiplicity one} this would imply that $\pi_1,\pi_2$ are twist-equivalent, a contradiction!

Now the three representations $\pi_1 \boxtimes \pi_2, \ov{\pi_3}$ and $\Ad(\pi_1) \boxtimes \ov{\pi_3}$ are pairwise non-isomorphic, since they have different degrees. We have
\begin{align*}
    L(s,\Pi \times \ov{\Pi}) &= L(s,\pi_1 \times \pi_2 \times \pi_3) \cdot L(s,\ov{\pi_1} \times \ov{\pi_2} \times \ov{\pi_3}) \\
    &\cdot L(s,(\pi_1 \boxtimes \pi_2) \times (\Ad(\pi_1) \boxtimes \pi_3)) \cdot L(s,(\ov{\pi_1} \boxtimes \ov{\pi_2}) \times (\Ad(\pi_1) \boxtimes \ov{\pi_3})) \\
    &\cdot L(s,\ov{\pi_3} \times (\Ad(\pi_1) \boxtimes \pi_3))^2 \\
    &\cdot L(s, (\pi_1 \boxtimes \pi_2) \times (\ov{\pi_1} \boxtimes \ov{\pi_2})) \cdot L(s,\pi_3 \times \ov{\pi_3}) \cdot L(s,(\Ad(\pi_1) \boxtimes \pi_3) \times (\Ad(\pi_1) \boxtimes \ov{\pi_3})).
\end{align*}
Note that we can further decompose $L(s,(\pi_1 \boxtimes \pi_2) \times (\Ad(\pi_1) \boxtimes \pi_3))$ as 
\begin{align*}
    L(s,(\pi_1 \boxtimes \pi_2) \times (\Ad(\pi_1) \boxtimes \pi_3)) &= L(s,(\pi_1 \boxtimes \Ad(\pi_1)) \times (\pi_2 \boxtimes \pi_3)) \\
    &= L(s,(\pi_1 \boxplus A^3(\pi_1)) \times (\pi_2 \boxtimes \pi_3)) \\
    &= L(s,\pi_1 \times \pi_2 \times \pi_3) \cdot L(s,A^3(\pi_1) \times (\pi_2 \boxtimes \pi_3)).
\end{align*}
Thus $L(s,\Pi \times \ov{\Pi})$ is equal to 
\begin{align*}
    L(s,\Pi \times \ov{\Pi}) &= L(s,\pi_1 \times \pi_2 \times \pi_3)^2 \cdot L(s,\ov{\pi_1} \times \ov{\pi_2} \times \ov{\pi_3})^2 \\
    &\cdot L(s,A^3(\pi_1) \times (\pi_2 \boxtimes \pi_3)) \cdot L(s,\ov{A^3(\pi_1)} \times (\ov{\pi_2} \boxtimes \ov{\pi_3})) \\
    &\cdot L(s,\ov{\pi_3} \times (\Ad(\pi_1) \boxtimes \pi_3))^2 \\
    &\cdot L(s, (\pi_1 \boxtimes \pi_2) \times (\ov{\pi_1} \boxtimes \ov{\pi_2})) \cdot L(s,\pi_3 \times \ov{\pi_3}) \cdot L(s,(\Ad(\pi_1) \boxtimes \pi_3) \times (\Ad(\pi_1) \boxtimes \ov{\pi_3})).
\end{align*}
By Lemma \ref{Landau-Siegel zero lemma}, $L(s,\pi_1 \times \pi_2 \times \pi_3)$ has no Landau-Siegel zero.

\subsection{Twist-equivalent Type}
Assume $(\pi_1,\pi_2,\pi_3)$ is of twist-equivalent type. Recall that this means $\pi_1,\pi_2,\pi_3$ are all non-dihedral, but $\pi_i$ is twist-equivalent to $\pi_j$ for some $i \neq j$. Without loss of generality let us assume that $\pi_1$ and $\pi_2$ are twist-equivalent. Write $\pi_2 \cong \pi_1 \otimes \eta$ for some character $\eta$ on $F$. Then $L(s,\pi_1 \times \pi_2 \times \pi_3)$ decomposes as 
\begin{align*}
    L(s,\pi_1 \times \pi_2 \times \pi_3) &= L(s,\pi_1 \times (\pi_1 \otimes \eta) \times \pi_3) \\
    &= L(s,(\pi_1 \boxtimes \pi_1) \times \pi_3 \otimes \eta) \\
    &= L(s,(\Ad(\pi_1) \otimes \omega_1 \boxplus \omega_1) \times \pi_3 \otimes \eta) \\
    &= L(s,\Ad(\pi_1) \times \pi_3 \otimes \omega_1\eta) \cdot L(s,\pi_3 \otimes \omega_1\eta).
\end{align*}
Now $L(s,\pi_3 \otimes \omega_1\eta)$ has no Landau-Siegel zero by Lemma \ref{standard L-function Landau-Siegel zero} (3). Moreover, by Lemma \ref{Rankin Selberg Landau-Siegel zero} (2), $L(s,\Ad(\pi_1) \times \pi_3 \otimes \omega_1\eta)$ can only possibly have Landau-Siegel zeros when $\Ad(\pi_1)$ is twist-equivalent to $\Ad(\pi_3)$, which would imply that $\pi_1$ is twist-equivalent to $\pi_3$ by Lemma \ref{multiplicity one}. As a conclusion, $L(s,\pi_1 \times \pi_2 \times \pi_3)$ can only possibly have Landau-Siegel zeros if $\pi_1,\pi_2,\pi_3$ are all twist-equivalent to each other. 

From now on we may further assume $\pi_3 \cong \pi_1 \otimes \mu$ for some character $\mu$ on $F$, and study Landau-Siegel zeros of $L(s,\Ad(\pi_1) \times \pi_3 \otimes \omega_1\eta)$. In this case we have
\begin{align*}
    L(s,\Ad(\pi_1) \times \pi_3 \otimes \omega_1\eta) &= L(s,\Ad(\pi_1) \times (\pi_1 \otimes \mu) \otimes \omega_1\eta) \\
    &= L(s,(\Ad(\pi_1) \boxtimes \pi_1) \otimes \omega_1\eta\mu) \\
    &= L(s,(\pi_1 \boxplus A^3(\pi_1)) \otimes \omega_1\eta\mu) \\
    &= L(s,\pi_1 \otimes \omega_1\eta\mu) \cdot L(s,A^3(\pi_1) \otimes \omega_1\eta \mu).
\end{align*}
Now $L(s,\pi_1 \otimes \omega_1\eta\mu)$ has no Landau-Siegel zero by Lemma \ref{standard L-function Landau-Siegel zero} (3). To further discuss the Landau-Siegel zeros of $L(s,A^3(\pi_1) \otimes \omega_1\eta\mu)$ we consider the following cases:

\begin{enumerate}
    \item Assume $\pi_1$ is tetrahedral, i.e., $\Ad(\pi_1)$ is cuspidal and monomial. Write $\Ad(\pi_1) \cong \Ad(\pi_1) \otimes \nu$ for some cubic character $\nu$ on $F$. Then by Lemma \ref{symmetric power functorality} (2), $A^3(\pi_1)$ is not cuspidal and decomposes as 
    $$A^3(\pi_1) = (\pi_1 \otimes \nu) \boxplus (\pi_1 \otimes \nu^2).$$
    Thus we have 
    $$L(s,A^3(\pi_1) \otimes \omega_1\eta\mu) = L(s,\pi_1 \otimes \omega_1\eta\mu\nu) \cdot L(s,\pi_1 \otimes \omega_1\eta\mu\nu^2),$$
    which has no Landau-Siegel zero by Lemma \ref{standard L-function Landau-Siegel zero} (3).
    \item Assume $\pi_1$ is octahedral, i.e., $\Sym^3(\pi_1)$ is cuspidal and monomial. Thus $A^3(\pi_1) \otimes \omega_1\eta\mu = \Sym^3(\pi_1) \otimes \eta\mu$ is also cuspidal and monomial. Therefore $L(s,A^3(\pi_1) \otimes \omega_1\eta\mu)$ has no Landau-Siegel zero by Lemma \ref{standard L-function Landau-Siegel zero} (2).
    \item Assume $\pi_1$ is not of solvable polyhedral type. Then $\Sym^3(\pi_1)$ is cuspidal, and so is $\Sym^3(\pi_1) \otimes \eta\mu$. We construct
    $$\Pi := 1 \boxplus \Ad(\pi_1) \boxplus \Sym^3(\pi_1) \otimes \eta\mu.$$
    Then we have 
    \begin{align*}
        L(s,\Pi \times \ov{\Pi}) &= L(s,\Sym^3(\pi_1) \otimes \eta\mu) \cdot L(s,\ov{\Sym^3(\pi_1) \otimes \eta\mu}) \\
        &\cdot L(s,\Ad(\pi_1) \times (\Sym^3(\pi_1) \otimes \eta\mu)) \cdot L(s,\Ad(\pi_1) \times (\ov{\Sym^3(\pi_1) \otimes \eta\mu})) \\
        &\cdot L(s,\Ad(\pi_1))^2 \cdot \zeta_F(s) \cdot L(s,\Ad(\pi_1) \times \Ad(\pi_1)) \\
        &\cdot L(s,(\Sym^3(\pi_1) \otimes \eta\mu) \times (\ov{\Sym^3(\pi_1) \otimes \eta\mu})).
    \end{align*}
    Note that $L(s,\Ad(\pi_1) \times (\Sym^3(\pi_1) \otimes \eta\mu))$ decomposes as 
    \begin{align*}
        L(s,\Ad(\pi_1) \times (\Sym^3(\pi_1) \otimes \eta\mu)) &= L(s,\Sym^3(\pi_1) \otimes \eta\mu) \\
        &\cdot L(s,\pi_1 \otimes \omega_1\eta\mu) \cdot L(s,\pi_1;\Sym^5 \otimes \omega_1^{-1}\eta\mu).
    \end{align*}
    Thus $L(s,\Pi \times \ov{\Pi})$ equals
    \begin{align*}
        L(s,\Pi \times \ov{\Pi}) &= L(s,\Sym^3(\pi_1) \otimes \eta\mu)^2 \cdot L(s,\ov{\Sym^3(\pi_1) \otimes \eta\mu})^2 \\
        &\cdot L(s,\pi_1 \otimes \omega_1\eta\mu) \cdot L(s,\ov{\pi_1 \otimes \omega_1\eta\mu}) \\
        &\cdot L(s,\pi_1;\Sym^5 \otimes \omega_1^{-1}\eta\mu) \cdot L(s,\ov{\pi_1};\Sym^5 \otimes \omega_1\eta^{-1}\mu^{-1}) \\
        &\cdot L(s,\Ad(\pi_1))^2 \cdot \zeta_F(s) \cdot L(s,\Ad(\pi_1) \times \Ad(\pi_1)) \\
        &\cdot L(s,(\Sym^3(\pi_1) \otimes \eta\mu) \times (\ov{\Sym^3(\pi_1) \otimes \eta\mu})).
    \end{align*}
    By Lemma \ref{Landau-Siegel zero lemma}, $L(s,\Sym^3(\pi_1) \otimes \eta\mu)$ has no Landau-Siegel zero if $L(s,\pi_1;\Sym^5 \otimes \omega_1^{-1}\eta\mu)$ is analytic in some interval $(t,1)$, $t<1$. This proves Theorem \ref{Theorem A, refined version} (3).
\end{enumerate}

\subsection{Dihedral Type}\label{Section 3.3}
Assume $(\pi_1,\pi_2,\pi_3)$ is of dihedral type. Recall this means at least one of $\pi_1,\pi_2,\pi_3$, say $\pi_1$, is dihedral. Write $\pi_1 = I_K^F(\chi_1)$ for a character $\chi_1$ on some quadratic extension $K/F$. Then we have 
$$\pi_1 \boxtimes \pi_2 = I_K^F(\chi_1) \boxtimes \pi_2 = I_K^F(B_F^K(\pi_2) \otimes \chi_1).$$
By Lemma \ref{base change and automporphic induction} we have 
\begin{align*}
    L(s,\pi_1 \times \pi_2 \times \pi_3) &= L(s,I_K^F(B_F^K(\pi_2) \otimes \chi_1) \times \pi_3) \\
    &= L(s,B_F^K(\pi_2) \times B_F^K(\pi_3) \otimes \chi_1).
\end{align*}
We discuss Landau-Siegel zeros of the $\GL(2) \times \GL(2)$ Rankin-Selberg $L$-function $L(s,B_F^K(\pi_2) \times B_F^K(\pi_3) \otimes \chi_1)$ by the cuspidality of $B_F^K(\pi_2)$ and $B_F^K(\pi_3)$.

\begin{enumerate}
    \item Assume exactly one of $B_F^K(\pi_2)$ and $B_F^K(\pi_3)$ is cuspidal, say $B_F^K(\pi_2)$ is cuspidal, but $B_F^K(\pi_3)$ is not. By Lemma \ref{base change and automporphic induction} we know that $\pi_3 = I_K^F(\chi_3)$ for some character $\chi_3$ on $K$. In this case we have  
    \begin{align*}
        L(s,\pi_1 \times \pi_2 \times \pi_3) &= L(s,B_F^K(\pi_2) \times B_F^K(\pi_3) \otimes \chi_1) \\
        &= L(s,(B_F^K(\pi_2) \otimes \chi_1) \times (\chi_3 \boxplus \chi_3^\theta)) \\ 
            &= L(s,B_F^K(\pi_2) \otimes \chi_1\chi_3) \cdot L(s,B_F^K(\pi_2) \otimes \chi_1\chi_3^\theta),
    \end{align*}
    which has no Landau-Siegel zero by Lemma \ref{standard L-function Landau-Siegel zero} (3).
    \item Assume that both $B_F^K(\pi_2)$ and $B_F^K(\pi_3)$ are cuspidal. Let $\eta = \eta_{K/F}$ be the quadratic character on $F$ associated with $K/F$. Then by Lemma \ref{base change and automporphic induction} we know that $\pi_2$ is not isomorphic to $\pi_2 \otimes \eta$, and $\pi_3$ is not isomorphic to $\pi_3 \otimes \eta$.
    \begin{enumerate}
        \item Assume further that at least one of $\pi_1 \boxtimes \pi_3$ and $\pi_1 \boxtimes \pi_2$, say $\pi_1 \boxtimes \pi_3$, is cuspidal. Then we have 
        $$(\pi_1 \boxtimes \pi_3) \otimes \eta = (\pi_1 \otimes \eta) \boxtimes \pi_3 = \pi_1 \boxtimes \pi_3.$$
        Since $\pi_2$ is not isomorphic to $\pi_2 \otimes \eta$, the Rankin-Selberg $L$-function
        $$L(s,(\pi_1 \boxtimes \pi_3) \times \pi_2) = L(s,\pi_1 \times \pi_2 \times \pi_3)$$
        has no Landau-Siegel zero by Lemma \ref{new Rankin-Selberg Landau-Siegel zero} (2).
        \item Assume otherwise that neither $\pi_1 \boxtimes \pi_3$ nor $\pi_1 \boxtimes \pi_2$ is cuspidal. Then by Lemma \ref{Rankin-Selberg product functorality} (1), we have $B_F^K(\pi_2) \cong B_F^K(\pi_2) \otimes \chi_1^{\theta}\chi_1^{-1}$ and $B_F^K(\pi_3) \cong B_F^K(\pi_3) \otimes \chi_1^\theta\chi_1^{-1}$. Let $L/K$ be the quadratic extension associated with the quadratic character $\chi_1^\theta\chi_1^{-1}$ on $K$. Then $B_F^K(\pi_2) \otimes \chi_1 = I_L^K(\xi_2)$ and $B_F^K(\pi_3) = I_L^K(\xi_3)$ for characters $\xi_2,\xi_3$ on $L$. In this case we have 
        \begin{align*}
            L(s,\pi_1 \times \pi_2 \times \pi_3) &= L(s,B_F^K(\pi_2) \times B_F^K(\pi_3) \otimes \chi_1) \\
            &= L(s,I_L^K(\xi_2) \times I_L^K(\xi_3)) \\
            &= L(s,\xi_3\xi_2) \cdot L(s,\xi_3\xi_2^{\theta_{L/K}}).
        \end{align*}
        Thus $L(s,\pi_1 \times \pi_2 \times \pi_3)$ has no Landau-Siegel zero unless $(\xi_3\xi_2)^2 = 1$ or $(\xi_3\xi_2^{\theta_{L/K}})^2 =1$. Moreover, there is at most one Landau-Siegel zero, by the same argument as in the proof of \cite[Theorem A]{RamakrishnanWang2003}. This proves Theorem \ref{Theorem A, refined version} (2).
    \end{enumerate}
    \item Assume that neither $B_F^K(\pi_2)$ nor $B_F^K(\pi_3)$ is cuspidal. Then $\pi_2 = I_K^F(\chi_2)$ and $\pi_3 = I_K^F(\chi_3)$ for characters $\chi_2,\chi_3$ on $F$. By Lemma \ref{base change and automporphic induction} we have $B_F^K(\pi_2) = \chi_2 \boxplus \chi_2^\theta$ and $B_F^K(\pi_3) = \chi_3 \boxplus \chi_3^\theta$. In this case we have 
    \begin{align*}
        L(s,\pi_1 \times \pi_2 \times \pi_3) &= L(s,(\chi_2 \boxplus \chi_2^\theta) \times (\chi_3 \boxplus \chi_3^\theta) \otimes \chi_1) \\
        &= L(s,\chi_1\chi_2\chi_3)\cdot L(s,\chi_1\chi_2\chi_3^\theta) \cdot L(s,\chi_1\chi_2^\theta\chi_3) \cdot L(s,\chi_1\chi_2^\theta\chi_3^\theta).
    \end{align*}
    A priori there are at most four Landau-Siegel zeros, if the four characters $\chi_1\chi_2\chi_3, \chi_1\chi_2\chi_3^\theta, \chi_1\chi_2^\theta\chi_3$ and $\chi_1\chi_2^\theta\chi_3^\theta$ are all real. However, we can generalize the argument in \cite[Section 4]{RamakrishnanWang2003} to show that there are at most two (and in most cases, even at most one) Landau-Siegel zeros. We discuss the following cases depending on the number of real characters among $\chi_1\chi_2\chi_3, \chi_1\chi_2\chi_3^\theta, \chi_1\chi_2^\theta\chi_3$ and $\chi_1\chi_2^\theta\chi_3^\theta$.
    \begin{enumerate}
        \item Assume there is at most one real character. Then $L(s,\pi_1 \times \pi_2 \times \pi_3)$ has at most one Landau-Siegel zero, since $L$-functions of complex characters have no Landau-Siegel zero by Lemma \ref{standard L-function Landau-Siegel zero} (1).
        \item Assume there are exactly two real characters, say $\chi_1\chi_2\chi_3$ and $\chi_1\chi_2\chi_3^\theta$ are real. Then one can show that $L(s,\chi_1\chi_2\chi_3) \cdot L(s,\chi_1\chi_2\chi_3^\theta)$ has at most one Landau-Siegel zero using the same argument as in \cite[Section 4]{RamakrishnanWang2003}.
        \item We claim that there cannot be exactly three real characters. This is by noting that if three of them, say $\chi_1\chi_2\chi_3,\chi_1\chi_2\chi_3^\theta$ and $\chi_1\chi_2^\theta\chi_3$, are real, then the fourth character 
        $$\chi_1\chi_2^{\theta}\chi_3^{\theta} = \frac{\chi_1\chi_2\chi_3^{\theta} \cdot \chi_1\chi_2^{\theta}\chi_3 }{\chi_1\chi_2\chi_3}$$
        is also real.
        \item Assume that all four characters are real. Then we further discuss by counting the number of trivial characters.
        \begin{enumerate}
            \item Assume further that there is at least one trivial character, say $\chi_1\chi_2\chi_3 = 1$. We set $\mu := \chi_1\chi_2\chi_3^\theta$ and $\nu := \chi_1\chi_2^\theta\chi_3$. Then 
            $$\chi_1\chi_2^{\theta}\chi_3^{\theta} = \frac{\chi_1\chi_2\chi_3^{\theta} \cdot \chi_1\chi_2^{\theta}\chi_3 }{\chi_1\chi_2\chi_3} = \mu\nu.$$
            In this case we have 
            $$L(s,\pi_1 \times \pi_2 \times \pi_3) = \zeta_K(s) L(s,\mu)L(s,\nu)L(s,\mu\nu).$$
            Note that since $\chi_2 \neq \chi_2^\theta$ and $\chi_3 \neq \chi_3^\theta$, $\mu$ and $\nu$ are both non-trivial quadratic. Thus $L(s,\pi_1 \times \pi_2 \times \pi_3)$ is a Dirichlet series with non-negative coefficients, that has a pole of order $2$ or $1$ at $s=1$, depending on whether $\mu\nu = 1$ or not (equivalently, whether $(\chi_2\chi_3)^\theta = \chi_2\chi_3$ or not). Thus $L(s,\pi_1 \times \pi_2 \times \pi_3)$ has at most two Landau-Siegel zeros (or at most one Landau-Siegel zero, if $(\chi_2\chi_3)^\theta \neq \chi_2\chi_3$).
            \item Assume otherwise that all four characters are non-trivial quadratic. We set $\alpha := \chi_1\chi_2\chi_3$, $\beta := \chi_3^\theta/\chi_3$ and $\gamma := \chi_2^\theta/\chi_2$. Note that by our assumption $\alpha,\beta,\gamma$ are all non-trivial quadratic. We have the following relations:
            \begin{align*}
                    \chi_1\chi_2\chi_3^{\theta} &= \alpha\beta, \\
                    \chi_1\chi_2^{\theta}\chi_3 &= \alpha\gamma, \\
                    \chi_1\chi_2^{\theta}\chi_3^{\theta} &= \alpha\beta\gamma.
            \end{align*}
            In this case we have 
            $$L(s,\pi_1 \times \pi_2 \times \pi_3) = L(s,\alpha)L(s,\alpha\beta)L(s,\alpha\gamma)L(s,\alpha\beta\gamma).$$
            Consider the auxiliary $L$-function
            $$L(s) := \zeta_{K}(s)L(s,\alpha)L(s,\beta)L(s,\gamma)L(s,\alpha\beta)L(s,\alpha\gamma)L(s,\beta\gamma)L(s,\alpha\beta\gamma),$$
            which is a Dirichlet series with non-negative coefficients, that has a pole of order $2$ or $1$ at $s=1$, depending on whether $\beta\gamma = 1$ or not (equivalently, whether $(\chi_2\chi_3)^\theta = \chi_2\chi_3$ or not). Thus $L(s,\pi_1 \times \pi_2 \times \pi_3)$, as part of $L(s)$, has at most two Landau-Siegel zeros (or at most one Landau-Siegel zero, if $(\chi_2\chi_3)^\theta \neq \chi_2\chi_3$). We have thus finished the proof of Theorem \ref{Theorem A, refined version} (1).
        \end{enumerate}
    \end{enumerate}
\end{enumerate}

\section{Proof of Theorem B}\label{Section 4}

The goal of this section is to prove the following refined version of Theorem \ref{Theorem B}:

\begin{theorem}\label{Theorem B, refined version}
    Let $\pi_1,\pi_2 \in \cA_0(2,F)$ and $\pi \in \cA_0(3,F)$ with central characters $\omega_1,\omega_2$ and $\omega$ respectively. Then $L(s,\pi_1 \times \pi_2 \times \pi)$ has no Landau-Siegel zero except possibly in the following cases:
    \begin{enumerate}
        \item $\pi_1,\pi_2$ are non-dihedral twist-equivalent, and $\pi$ is twist-equivalent to $\Ad(\pi_1)$. Moreover, write $\pi_2 \cong \pi_1 \otimes \eta$ and $\pi \otimes \omega_1\eta \cong \Ad(\pi_1) \otimes \mu$ for character $\eta$ and $\mu$ on $F$. Then there is at most one Landau-Siegel zero coming form $L(s,\mu)$, which can only occur if $\mu^2 = 1$.
        \item $\pi_1,\pi_2$ are not of solvable polyhedral type, and are not twist-equivalent, but $\pi$ is twist-equivalent to either $\Ad(\pi_1)$ or $\Ad(\pi_2)$, say $\Ad(\pi_1)$. Moreover, write $\pi \cong \Ad(\pi_1) \otimes \eta$ for some character $\eta$ on $F$. Then there is no Landau-Siegel zero if $A^3(\pi_2 \otimes \eta)$ is not isomorphic to $\ov{A^3(\pi_1)}$. 
    \end{enumerate}
\end{theorem}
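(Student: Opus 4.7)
The plan is to mirror the case-by-case approach of Theorem~\ref{Theorem A, refined version}: partition $(\pi_1,\pi_2,\pi)$ by type (general, twist-equivalent, dihedral), and in each case either build an isobaric representation $\Pi$ to invoke Lemma~\ref{Landau-Siegel zero lemma}, or decompose $L(s,\pi_1 \times \pi_2 \times \pi)$ into factors whose Landau-Siegel zeros are already controlled by the results of Section~\ref{Section 2}. The new difficulty compared to Theorem~A is that its auxiliary summand $\Ad(\pi_1) \boxtimes \ov{\pi_3} \in \cA(6,F)$, a Kim-Shahidi $\GL(3) \boxtimes \GL(2)$ lift, cannot be transplanted directly: the analogous object $\Ad(\pi_1) \boxtimes \ov{\pi}$ would live on $\GL(3) \boxtimes \GL(3)$ and is not known to be automorphic. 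A new construction is therefore required.

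For the general type case I would take
\[
\Pi := (\pi_1 \boxtimes \pi) \boxplus \ov{\pi_2} \boxplus (\Ad(\pi_1) \boxtimes \ov{\pi_2}),
\]
whose three summands are pairwise non-isomorphic and cuspidal of degrees $6$, $2$, $6$. The cuspidality of the two $\GL(6)$ pieces follows from Lemma~\ref{Rankin-Selberg product functorality}(2) together with Lemma~\ref{multiplicity one} (the latter converts the twist-inequivalence of $\pi_1,\pi_2$ into that of $\Ad(\pi_1),\Ad(\pi_2)$). The decisive computation uses the rearrangement $L(s,(X \boxtimes Y) \times (Z \boxtimes W)) = L(s,(X \boxtimes Z) \times (Y \boxtimes W))$ together with the identity $\pi_1 \boxtimes \Ad(\pi_1) = \pi_1 \boxplus A^3(\pi_1)$:
\[
L(s,(\pi_1 \boxtimes \pi) \times (\Ad(\pi_1) \boxtimes \pi_2)) = L(s,\pi_1 \times \pi_2 \times \pi) \cdot L(s,A^3(\pi_1) \times \pi_2 \times \pi).
\]
Combined with the direct factor $L(s,(\pi_1 \boxtimes \pi) \times \pi_2) = L(s,\pi_1 \times \pi_2 \times \pi)$, this produces $a+b=4$ copies of $L(s,\pi_1 \times \pi_2 \times \pi)$ and its conjugate inside $L(s,\Pi \times \ov{\Pi})$, against $e = 3$. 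All residual factors reduce to standard Rankin-Selberg $L$-functions of cuspidals or of Kim-Shahidi lifts, hence analytic in a suitable interval $(t,1)$, so Lemma~\ref{Landau-Siegel zero lemma} closes this case.

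For the twist-equivalent type I would decompose directly. If $\pi_2 \cong \pi_1 \otimes \eta$, the identity $\pi_1 \boxtimes \pi_2 = (\Ad(\pi_1) \otimes \omega_1\eta) \boxplus (\omega_1\eta)$ gives
\[
L(s,\pi_1 \times \pi_2 \times \pi) = L(s,\pi \otimes \omega_1\eta) \cdot L(s,\Ad(\pi_1) \times \pi \otimes \omega_1\eta);
\]
the first factor has no Landau-Siegel zero by Lemma~\ref{standard L-function Landau-Siegel zero}(3), and for the second, the self-duality of $\Ad(\pi_1)$ together with Lemma~\ref{new Rankin-Selberg Landau-Siegel zero}(1) forces any Landau-Siegel zero to occur only when $\pi \otimes \omega_1\eta$ is also self-dual, which pins us to the sub-case where $\pi$ is twist-equivalent to $\Ad(\pi_1)$ and to case~(1) of the theorem. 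There a further factorization via $\Ad(\pi_1) \boxtimes \Ad(\pi_1) = A^4(\pi_1) \boxplus \Ad(\pi_1) \boxplus 1$ writes $L(s,\pi_1 \times \pi_2 \times \pi) = L(s,\Ad(\pi_1) \otimes \mu)^2 \cdot L(s,A^4(\pi_1) \otimes \mu) \cdot L(s,\mu)$, and one checks that the $A^4(\pi_1) \otimes \mu$ factor carries no Landau-Siegel zero (using self-duality of $A^4(\pi_1)$ and Lemma~\ref{symmetric power functorality}(3) in the tetrahedral and octahedral sub-cases), leaving $L(s,\mu)$ as the lone possible source, which contributes at most when $\mu^2 = 1$. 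Symmetrically, the sub-case where $\pi$ is twist-equivalent to $\Ad(\pi_1)$ but $\pi_1$ is not twist-equivalent to $\pi_2$ yields
\[
L(s,\pi_1 \times \pi_2 \times \pi) = L(s,\pi_1 \times \pi_2 \otimes \eta) \cdot L(s,A^3(\pi_1) \times \pi_2 \otimes \eta),
\]
with the first factor dispatched by Lemma~\ref{Rankin Selberg Landau-Siegel zero}(1); the second, essentially a twisted $\Sym^3(\pi_1) \times \pi_2$, is the hardest step of the whole proof, requiring the hypothesis $A^3(\pi_2 \otimes \eta) \not\cong \ov{A^3(\pi_1)}$ together with an auxiliary construction in the spirit of Thorner's analysis of $L(s,\Ad(\pi_1) \times \Ad(\pi_2) \otimes \chi)$ in \cite{Thorner2025symmetricsquare}.

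For the dihedral type, writing $\pi_1 = I_K^F(\chi_1)$ and using the adjointness of automorphic induction (Lemma~\ref{base change and automporphic induction}(3)) gives
\[
L(s,\pi_1 \times \pi_2 \times \pi) = L(s,B_F^K(\pi_2) \times B_F^K(\pi) \otimes \chi_1),
\]
a $\GL(2) \times \GL(3)$ Rankin-Selberg $L$-function over $K$. Since no $\pi \in \cA_0(3,F)$ can satisfy $\pi \cong \pi \otimes \eta_{K/F}$ for dimension reasons via Lemma~\ref{base change and automporphic induction}(1), $B_F^K(\pi)$ is automatically cuspidal by Lemma~\ref{base change and automporphic induction}(4), so it remains to split on the cuspidality of $B_F^K(\pi_2)$. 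The non-cuspidal case collapses into a product of two standard $\GL(3)$ $L$-functions handled by Lemma~\ref{standard L-function Landau-Siegel zero}(3), while the cuspidal case appeals to \cite[Theorem 2]{Luo2023}, whose exceptional subcase, in which $B_F^K(\pi) \otimes \chi_1$ is twist-equivalent to $\Ad(B_F^K(\pi_2))$, is eliminated by feeding the monomial identity $\pi_1 \cong \pi_1 \otimes \eta_{K/F}$ into Lemma~\ref{new Rankin-Selberg Landau-Siegel zero}(2).
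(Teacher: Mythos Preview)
Your overall architecture matches the paper's, and several pieces are essentially identical: the general-type $\Pi$, the dihedral reduction via Lemma~\ref{base change and automporphic induction}, and the first decompositions in the twist-equivalent cases. But there is one genuine gap and two points where you are vaguer than the argument actually allows.

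\textbf{The real gap.} In your twist-equivalent sub-case with $\pi_2 \cong \pi_1 \otimes \eta$, you correctly reduce to $L(s,\Ad(\pi_1) \times (\pi \otimes \omega_1\eta))$ and observe via Lemma~\ref{new Rankin-Selberg Landau-Siegel zero}(1) that a Landau-Siegel zero forces $\pi \otimes \omega_1\eta$ to be self-dual. But you then assert that this ``pins us to the sub-case where $\pi$ is twist-equivalent to $\Ad(\pi_1)$.'' That implication is false: self-duality of a $\GL(3)$ cuspidal only gives (via \cite[Theorem~A]{Ramakrishnan2014}) $\pi \otimes \omega_1\eta \cong \Ad(\pi_3) \otimes \mu$ for \emph{some} non-dihedral $\pi_3$ and quadratic $\mu$, with no reason for $\pi_3$ to be twist-equivalent to $\pi_1$. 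The case $\pi_3 \not\sim \pi_1$ must still be eliminated to match the theorem statement, and this is exactly where the paper imports Thorner's method from \cite{Thorner2025symmetricsquare}: one builds an auxiliary Dirichlet series with nonnegative $-L'/L$-coefficients containing $L(s,\Ad(\pi_1)\times\Ad(\pi_3)\otimes\mu)^8$ and counts poles against multiplicities, separately in the tetrahedral, octahedral, and generic sub-cases. Your outline skips this entirely, so as written the proof does not establish the theorem when $\pi_1\sim\pi_2$ but $\pi$ is essentially self-dual yet not a twist of $\Ad(\pi_1)$.

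\textbf{Lesser issues.} In the general-type case, the two degree-$6$ summands $\pi_1\boxtimes\pi$ and $\Ad(\pi_1)\boxtimes\ov{\pi_2}$ are not distinguished by degree; the paper checks non-isomorphism by computing their Rankin--Selberg $L$-function and showing it is entire (this uses that $\pi_2\boxtimes\pi$ is cuspidal of degree $6$). In your treatment of $L(s,A^4(\pi_1)\otimes\mu)$, ``self-duality of $A^4(\pi_1)$'' is not enough in the non-polyhedral case: the paper builds yet another $\Pi$ and invokes Takeda's holomorphy for twisted symmetric-square $L$-functions on $\GL(4)$. Finally, for $L(s,A^3(\pi_1)\times\pi_2\otimes\eta)$ with $\pi_1\not\sim\pi_2$, the construction used is not Thorner's but the paper's own general-type $\Pi$ with the roles of $\pi_1,\pi_2$ swapped and $\pi$ specialized to $\Ad(\pi_1)$; the condition $A^3(\pi_2\otimes\eta)\not\cong\ov{A^3(\pi_1)}$ enters precisely to keep the two $\GL(6)$ summands non-isomorphic.
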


\begin{remark}
    The condition in Theorem \ref{Theorem B, refined version} (2) is satisfied if $F$ is totally real, and $\pi_1,\pi_2$ are regular algebraic by \cite[Lemma 4.2]{Thorner2025}. Thus there is no Landau-Siegel zero in this case.
\end{remark}

\subsection{General Type}
Assume $(\pi_1,\pi_2,\pi)$ is of general type. Recall that this means $\pi_1,\pi_2$ are non-dihedral, not twist-equivalent, and $\pi$ is not twist-equivalent to either $\Ad(\pi_1)$ or $\Ad(\pi_2)$. We set 
$$\Pi := (\pi_1 \boxtimes \pi) \boxplus \ov{\pi_2} \boxplus (\Ad(\pi_1) \boxtimes \ov{\pi_2}).$$
Now since $\pi_1$ is non-dihedral and $\pi$ is not twist-equivalent to $\Ad(\pi_1)$, by Lemma \ref{Rankin-Selberg product functorality} (2), we know that $\pi_1 \boxtimes \pi$ is cuspidal. Moreover, since $\pi_1$ and $\pi_2$ are not twist-equivalent, by Lemma \ref{multiplicity one} we know that $\Ad(\pi_1)$ and $\Ad(\ov{\pi_2}) = \Ad(\pi_2)$ are not twist-equivalent either. Therefore $\Ad(\pi_1) \boxtimes \ov{\pi_2}$ is also cuspidal. We then claim that $\pi_1 \boxtimes \pi$ and $\Ad(\pi_1) \boxtimes \ov{\pi_2}$ are not isomorphic. To do this we compute the Rankin-Selberg $L$-function $L(s,(\pi_1 \boxtimes \pi) \times \ov{(\Ad(\pi_1) \boxtimes \ov{\pi_2})})$:
\begin{align*}
    L(s,(\pi_1 \boxtimes \pi) \times \ov{(\Ad(\pi_1) \boxtimes \ov{\pi_2})})&= L(s,(\pi_1 \boxtimes \pi) \times (\Ad(\pi_1) \boxtimes \pi_2)) \\
    &= L(s,(\pi_1 \boxtimes \Ad(\pi_1)) \times (\pi_2 \boxtimes \pi)) \\
    &= L(s,(\pi_1 \boxplus A^3(\pi_1)) \times (\pi_2 \boxtimes \pi)) \\
    &= L(s,\pi_1 \times (\pi_2 \boxtimes\pi)) \cdot L(s,A^3(\pi_1) \times (\pi_2 \boxtimes \pi)).
\end{align*}
Now since $\pi_2$ is non-dihedral and $\pi$ is not twist-equivalent to $\Ad(\pi_2)$, by Lemma \ref{Rankin-Selberg product functorality} (2), $\pi_2 \boxtimes \pi$ is cuspidal of degree $6$, which is not isomorphic to either $\ov{\pi_1}$ or any isobaric constitute of $\ov{A^3(\pi_1)}$. Therefore both $L(s,\pi_1 \times (\pi_2 \boxtimes \pi))$ and $L(s,A^3(\pi_1) \times (\pi_2 \boxtimes \pi))$ are entire. Thus $\pi_1 \boxtimes \pi$ is not isomorphic to $\Ad(\pi_1) \boxtimes \ov{\pi_2}$. In view of the decomposition above we have
\begin{align*}
    L(s,\Pi \times \ov{\Pi}) &= L(s,\pi_1 \times \pi_2 \times \pi)^2 \cdot L(s,\ov{\pi_1} \times \ov{\pi_2} \times \ov{\pi})^2 \\
    &\cdot L(s,A^3(\pi_1) \times (\pi_2 \boxtimes \pi)) \cdot L(s,\ov{A^3(\pi_1)} \times (\ov{\pi_2 \boxtimes \pi})) \\
    &\cdot L(s,\pi_2 \times (\Ad(\pi_1) \boxtimes \ov{\pi_2}))^2 \\
    &\cdot L(s,(\pi_1 \boxtimes \pi) \times (\ov{\pi_1 \boxtimes \pi})) \cdot L(s,\pi_2 \times \ov{\pi_2}) \cdot L(s,(\Ad(\pi_1) \boxtimes \pi_2) \times (\ov{\Ad(\pi_1) \boxtimes \pi_2})).
\end{align*}
By Lemma \ref{Landau-Siegel zero lemma}, $L(s,\pi_1 \times \pi_2 \times \pi)$ has no Landau-Siegel zero.

\subsection{Twist-equivalent Type}
Assume $(\pi_1,\pi_2,\pi)$ is of twist-equivalent type. Recall that this means $\pi_1,\pi_2$ are both non-dihedral, but either (1) $\pi_1$ is twist-equivalent to $\pi_2$, or (2) $\pi$ is twist-equivalent to $\Ad(\pi_1)$ or $\Ad(\pi_2)$. Thus we have two cases to consider:

\begin{enumerate}
    \item Assume that $\pi_1$ is twist-equivalent to $\pi_2$. Then $\pi_2 \cong \pi_1 \otimes \eta$ for some character $\eta$ on $F$. In this case $L(s,\pi_1 \times \pi_2 \times \pi)$ decomposes as 
    \begin{align*}
        L(s,\pi_1 \times \pi_2 \times \pi) &= L(s,\pi_1 \times (\pi_1 \otimes \eta) \times \pi) \\
        &= L(s,(\pi_1 \boxtimes \pi_1) \times \pi \otimes \eta) \\
        &= L(s,(\Ad(\pi_1) \otimes \omega_1 \boxplus \omega_1) \times \pi \otimes \eta) \\
        &= L(s,\pi \otimes \omega_1\eta) \cdot L(s,\Ad(\pi_1) \times (\pi \otimes \omega_1\eta)).
    \end{align*}
    Now $L(s,\pi \otimes \omega_1\eta)$ has no Landau-Siegel zero by Lemma \ref{standard L-function Landau-Siegel zero} (3). If $\pi \otimes \omega_1\eta$ is non-self-dual, then $L(s,\Ad(\pi_1) \times (\pi \otimes \omega_1\eta))$ has no Landau-Siegel zero by Lemma \ref{new Rankin-Selberg Landau-Siegel zero} (1). Therefore $L(s,\pi_1 \times \pi_2 \times \pi)$ has no Landau-Siegel zero in this case. If $\pi \otimes \omega_1\eta$ is self-dual, then by \cite[Theorem A]{Ramakrishnan2014} we have $\pi \otimes \omega_1\eta \cong \Ad(\pi_3) \otimes \mu$ for some non-dihedral $\pi_3 \in \cA_0(2,F)$ and some character $\mu$ on $F$ with $\mu^2 = 1$. We are thus reduced to study Landau-Siegel zeros of the Rankin-Selberg $L$-function
    $$L(s,\Ad(\pi_1) \times \Ad(\pi_3) \otimes \mu),$$
    which has been studied in \cite{Thorner2025symmetricsquare}. We will briefly record the proof here for completeness. 
    \begin{enumerate}
        \item Assume further that $\pi_1$ is not twist-equivalent to $\pi_3$. Consider the following auxiliary $L$-function
        \begin{align*}
            L(s) &:= L(s,\Ad(\pi_1) \times \Ad(\pi_3) \otimes \mu)^8 \\
            &\cdot L(s,A^4(\pi_1) \times \Ad(\pi_3) \otimes \mu)^4 \cdot L(s,\Ad(\pi_3) \otimes \mu)^4 \\
            &\cdot L(s,\Ad(\pi_1) \times A^4(\pi_3))^2 \cdot L(s,\Ad(\pi_1) \times \Ad(\pi_3))^2 \cdot L(s,\Ad(\pi_1))^2 \\
            &\cdot L(s,\Ad(\pi_1) \times \Ad(\pi_1))^4 \cdot L(s,\Ad(\pi_3) \times \Ad(\pi_3))\\
            &\cdot L(s,(\Ad(\pi_1) \boxtimes \Ad(\pi_1)) \times (\Ad(\pi_3) \boxtimes \Ad(\pi_3))).
        \end{align*}
        Let us write 
        \begin{align*}
            L(s) &= \prod_v\prod_{j=1}^{d_L} \left(1-\frac{\alpha_{L,j}(v)}{q_v^s}\right)^{-1}, \\
            -\frac{L^\prime}{L}(s) &= \sum_{v} \sum_{\ell=0}^\infty \frac{a_L(v^\ell)}{q_v^{\ell s}}\cdot\log (q_v),
        \end{align*}
        where $d_L$ is the degree of $L(s)$, $v$ runs through finite places of $F$, and $q_v = \#F_v/\cO_v$ is the order of the residue field at $v$. The following two facts can be verified for $L(s)$:
        \begin{enumerate}
            \item For each finite place $v$ where $\pi_1,\pi_3$ and $\mu$ are unramified, we have 
            $$a_L(v^\ell) = |2a_{\Ad(\pi_1)}(v^\ell) + a_{\Ad(\pi_3) \otimes \mu}(v^\ell) + a_{\Ad(\pi_1) \times \Ad(\pi_3) \otimes \mu}(v^\ell)|^2 \geq 0.$$ 
            \item For each finite place $v$, we have $\alpha_{L,j}(v) \ll q_v^{\frac{8}{9}}$.
        \end{enumerate}
        The fact (i) is a direct computation and can be found in \cite[Lemma 3.2]{Thorner2025symmetricsquare}. The fact (ii) is due to the bound $|\alpha_{\pi_1,j}(v)|,|\alpha_{\pi_3,j}(v)| \leq  q_v^{\frac{1}{9}}$ \cite[Proposition 4.8]{KimShahidi2002FourthPowerCuspidality}. Thus by \cite[Lemma 5.9]{IwaniecKowalskiANT}, $L(s)$ has at most $r$ Landau-Siegel zeros, where $r$ is the order of pole of $L(s)$ at $s=1$. Let us proceed to count the order of pole of $L(s)$ at $s=1$, and compare it to the multiplicity of the factor $L(s,\Ad(\pi_1) \times \Ad(\pi_3) \otimes \mu)$. To do this we first apply the decomposition
        $$\Ad(\pi_1) \boxtimes \Ad(\pi_1) = A^4(\pi_1) \boxplus \Ad(\pi_1) \boxplus 1$$
        to further write $L(s)$ as 
        \begin{align*}
            L(s) &:= L(s,\Ad(\pi_1) \times \Ad(\pi_3) \otimes \mu)^8 \\
            &\cdot L(s,A^4(\pi_1) \times \Ad(\pi_3) \otimes \mu)^4 \cdot L(s,\Ad(\pi_3) \otimes \mu)^4 \\
            &\cdot L(s,\Ad(\pi_1) \times A^4(\pi_3))^3 \cdot L(s,\Ad(\pi_1) \times \Ad(\pi_3))^3 \cdot L(s,\Ad(\pi_1))^3 \\
            &\cdot L(s,A^4(\pi_1) \times A^4(\pi_3)) \cdot L(s,A^4(\pi_1) \times \Ad(\pi_3)) \cdot L(s,A^4(\pi_1)) \\
            &\cdot L(s,A^4(\pi_3)) \cdot L(s,\Ad(\pi_3)) \\
            &\cdot L(s,\Ad(\pi_1) \times \Ad(\pi_1))^4 \cdot L(s,\Ad(\pi_3) \times \Ad(\pi_3)) \cdot \zeta_F(s).
        \end{align*}
        Note that the last line of the expression above contributes to $L(s)$ a pole at $s=1$ of order 6. The only other factors that could possibly contribute to $L(s)$ a pole at $s=1$ are $L(s,A^4(\pi_1) \times A^4(\pi_3)), L(s,\Ad(\pi_1) \times A^4(\pi_3))^3, L(s,A^4(\pi_1) \times \Ad(\pi_3))$ and $L(s,A^4(\pi_1) \times \Ad(\pi_3) \otimes \mu)^4$. However, in view of Lemma \ref{symmetric power functorality} (3), Lemma \ref{multiplicity one} and the assumption that $\pi_1$ is not twist-equivalent to $\pi_3$, $L(s,\Ad(\pi_1) \times A^4(\pi_3))^3, L(s,A^4(\pi_1) \times \Ad(\pi_3))$ and $L(s,A^4(\pi_1) \times \Ad(\pi_3) \otimes \mu)^4$ are entire. Thus the only factor that my contribute to $L(s)$ additional poles at $s=1$ is $L(s,A^4(\pi_1) \times A^4(\pi_3))$.
        \begin{enumerate}
            \item Assume that $\pi_1$ is not of solvable polyhedral type. Then $A^4(\pi_1)$ is cuspidal. In this case the order of pole of $L(s)$ at $s=1$ is 7 or 6, depending on whether $A^4(\pi_1) \cong A^4(\pi_3)$ or not. In either case $L(s,\Ad(\pi_1) \times \Ad(\pi_3) \otimes \mu)$ has no Landau-Siegel zero, since any Landau-Siegel zero of it would be a Landau-Siegel zero of order at least 8 of $L(s).$
            \item Assume that $\pi_1$ is of octahedral type. Then by Lemma \ref{symmetric power functorality} (3), $A^4(\pi_1)$ decomposes as 
            $$A^4(\pi_1) = (Ad(\pi_1) \otimes \nu) \boxplus I_K^F(\chi^{-1}) $$
            for some quadratic character $\nu$ on $F$ and some character $\chi$ on a quadratic extension $K/F$. In this case $L(s,A^4(\pi_1) \times A^4(\pi_3))$ has at most 1 simple pole at $s=1$, which only occurs if $\pi_3$ is also octahedral. Thus $r \leq 7 < 8$ and $L(s,\Ad(\pi_1) \times \Ad(\pi_3) \otimes \mu)$ has no Landau-Siegel zero.
            \item Assume that $\pi_1$ is of tetrahedral type. Then by Lemma \ref{symmetric power functorality} (3), $A^4(\pi_1)$ decomposes as 
            $$A^4(\pi_1) = \Ad(\pi_1) \boxplus \nu \boxplus \nu^2$$
            for some cubic character $\nu$ on $F$. In this case $L(s,A^4(\pi_1) \times A^4(\pi_3))$ can only have a pole at $s=1$ if $\pi_3$ is also tetrahedral, and the order of the pole is at most 4. We then make use of the decomposition 
            \begin{align*}
                L(s,A^4(\pi_1) \times \Ad(\pi_3) \otimes \mu) &= L(s,(\Ad(\pi_1) \boxplus \nu \boxplus \nu^2) \times \Ad(\pi_3) \otimes \mu) \\
                &= L(s,\Ad(\pi_3) \otimes \mu\nu) \cdot L(s,\Ad(\pi_3) \otimes \mu\nu^2) \\
                &\cdot L(s,\Ad(\pi_1) \times \Ad(\pi_3) \otimes \mu)
            \end{align*}
            to see that any Landau-Siegel zero of $L(s,\Ad(\pi_1) \times \Ad(\pi_3) \otimes \mu)$ would be a Landau-Siegel zero of $L(s)$ of order at least 12. Since $r \leq 10 < 12$, $L(s,\Ad(\pi_1) \times \Ad(\pi_3) \otimes \mu)$ has no Landau-Siegel zero.
        \end{enumerate}
        \item Assume otherwise that $\pi_1$ is twist-equivalent to $\pi_3$. In this case $L(s,\Ad(\pi_1) \times \Ad(\pi_3) \otimes \mu)$ decomposes as 
        \begin{align*}
            L(s,\Ad(\pi_1) \times \Ad(\pi_3) \otimes \mu) &= L(s,\Ad(\pi_1) \boxtimes \Ad(\pi_1) \otimes \mu) \\
            &= L(s,(A^4(\pi_1) \boxplus \Ad(\pi_1) \boxplus 1) \otimes \mu) \\
            &= L(s,A^4(\pi_1) \otimes \mu) \cdot L(s,\Ad(\pi_1) \otimes \mu) \cdot L(s,\mu).
        \end{align*}
        Now $L(s,\Ad(\pi_1) \otimes \mu)$ has no Landau-Siegel zero by Lemma \ref{standard L-function Landau-Siegel zero} (3). We then prove that 
        $L(s,A^4(\pi_1) \otimes \mu)$ has no Landau-Siegel zero.
        \begin{enumerate}
            \item Assume that $\pi_1$ is of tetrahedral type. Then by Lemma \ref{symmetric power functorality} (3), $L(s,A^4(\pi_1) \otimes \mu)$ decomposes as 
            \begin{align*}
                L(s,A^4(\pi_1) \otimes \mu) &= L(s,(\Ad(\pi_1) \boxplus \nu \boxplus \nu^2) \otimes \mu) \\
                &= L(s,\Ad(\pi_1) \otimes \mu) \cdot L(s,\mu\nu) \cdot L(s,\mu\nu^2)
            \end{align*}
            for some cubic character $\nu$ on $F$. Now $L(s,\Ad(\pi_1) \otimes \mu)$ has no Landau-Siegel zero by Lemma \ref{standard L-function Landau-Siegel zero} (3). Moreover, $L(s,\mu\nu)$ and $L(s,\mu\nu^2)$ have no Landau-Siegel zero since $(\mu\nu)^2 = \nu^2 \neq 1$ and $(\mu\nu^2)^2 = \nu \neq 1$. Therefore $L(s,A^4(\pi_1) \otimes \mu)$ has no Landau-Siegel zero.
            \item Assume that $\pi_1$ is of octahedral type. Then by Lemma \ref{symmetric power functorality} (3), $L(s,A^4(\pi_1) \otimes \mu)$ decomposes as 
            \begin{align*}
                L(s,A^4(\pi_1) \otimes \mu) &= L(s,((\Ad(\pi_1) \otimes \nu) \boxplus I_K^F(\chi^{-1})) \otimes \mu) \\
                &= L(s,\Ad(\pi_1) \otimes \mu\nu) \cdot L(s,I_K^F(\chi^{-1}) \otimes \mu)
            \end{align*}
            for some quadratic character $\nu$ on $F$ and some character $\chi$ on a quadratic extension $K/F$, which has no Landau-Siegel zero by Lemma \ref{standard L-function Landau-Siegel zero} (3).
            \item Assume that $\pi_1$ is not of solvable polyhedral type. Then $A^4(\pi_1)$ is cuspidal. We set 
            $$\Pi := 1 \boxplus (A^4(\pi_1) \otimes \mu) \boxplus \Ad(\pi_1)$$
            and compute 
            \begin{align*}
                L(s,\Pi \times \ov{\Pi}) &= L(s,A^4(\pi_1) \otimes \mu)^2 \\
                &\cdot L(s,(A^4(\pi_1) \otimes \mu)\times \Ad(\pi_1))^2 \cdot L(s,\Ad(\pi_1))^2 \\
                &\cdot \zeta_F(s) \cdot L(s,\Ad(\pi_1) \times \Ad(\pi_1)) \cdot L(s,A^4(\pi_1) \times A^4(\pi_1))
            \end{align*}
            Note that $L(s,(A^4(\pi_1) \otimes \mu)\times \Ad(\pi_1))$ decomposes as 
            \begin{align*}
                L(s,(A^4(\pi_1) \otimes \mu)\times \Ad(\pi_1)) &= L(s,A^4(\pi_1) \otimes \mu) \\
                &\cdot L(s,\pi_1;\Sym^6 \otimes \omega_1^{-3}\mu) \cdot L(s,\Ad(\pi_1) \otimes \mu) \\
                &=L(s,A^4(\pi_1) \otimes \mu) \cdot L(s,\Sym^3(\pi_1);\Sym^2\otimes \omega_1^{-3}\mu)
            \end{align*}
            Thus $L(s,\Pi \times \ov{\Pi})$ can be further written as 
            \begin{align*}
                L(s,\Pi \times \ov{\Pi}) &= L(s,A^4(\pi_1) \otimes \mu)^4 \\
                &\cdot L(s,\Sym^3(\pi_1);\Sym^2\otimes \omega_1^{-3}\mu)^2 \cdot L(s,\Ad(\pi_1))^2 \\
                &\cdot \zeta_F(s) \cdot L(s,\Ad(\pi_1) \times \Ad(\pi_1)) \cdot L(s,A^4(\pi_1) \times A^4(\pi_1)).
            \end{align*}
            Let $S$ be the set of finite places of $F$ where $\pi_1$ or $\mu$ is ramified. By \cite[Corollary 5.8]{Takeda2014}, the partial $L$-function $L^S(s,\Sym^3(\pi_1);\Sym^2\otimes \omega_1^{-3}\mu)$ is holomorphic in $(7/8,1)$. Moreover, the finite product of local $L$-functions 
            $$\prod_{v \in S}L(s,\Sym^3(\pi_{1,v});\Sym^2 \otimes \omega_{1,v}1^{-3}\mu_v) = \prod_{v \in S} \frac{L(s,A^4(\pi_{1,v}) \times \Ad(\pi_{1,v}) \otimes \mu_v)}{L(s,A^4(\pi_{1,v}) \otimes \mu_v)}$$
            is holomorphic in $(56/65,1)$ by \cite[Theorem 2]{LuoRudnickSarnak1999}.
            Thus by Lemma \ref{Landau-Siegel zero lemma}, $L(s,A^4(\pi_1) \otimes \mu)$ has no Landau-Siegel zero.
        \end{enumerate}
        We have thus eliminated Landau-Siegel zeros of $L(s,A^4(\pi_1) \otimes \mu)$ in all cases. Therefore the only possible Landau-Siegel zero of $L(s,\Ad(\pi_1) \times \Ad(\pi_3) \otimes \mu)$ comes from $L(s,\mu)$. This proves Theorem \ref{Theorem B, refined version} (1).
    \end{enumerate}
    \item Assume $\pi_1,\pi_2$ are not twist-equivalent, and that $\pi$ is twist-equivalent to $\Ad(\pi_1)$ or $\Ad(\pi_2)$, say $\pi \cong \Ad(\pi_1) \otimes \eta$ for some character $\eta$ on $F$. In this case $L(s,\pi_1 \times \pi_2 \times \pi)$ decomposes as 
    \begin{align*}
        L(s,\pi_1 \times \pi_2 \times \pi) &= L(s,\pi_1 \times \pi_2 \times (\Ad(\pi_1) \otimes \eta)) \\
        &= L(s,(\pi_1 \boxtimes \Ad(\pi_1)) \times \pi_2 \otimes \eta) \\
        &= L(s,(\pi_1 \boxplus A^3(\pi_1)) \times \pi_2 \otimes \eta) \\
        &= L(s,\pi_1 \times \pi_2 \otimes \eta) \cdot L(s,A^3(\pi_1) \times \pi_2 \otimes \eta).
    \end{align*}
    Now since $\pi_1,\pi_2$ are non-dihedral and are not twist-equivalent, $L(s,\pi_1 \times \pi_2 \otimes \eta)$ has no Landau-Siegel zero by Lemma \ref{Rankin Selberg Landau-Siegel zero} (1). We then discuss Landau-Siegel zeros of $L(s,A^3(\pi_1) \times \pi_2 \otimes \eta)$:
    \begin{enumerate}
        \item Assume $\pi_1$ is tetrahedral. Then by Lemma \ref{symmetric power functorality} (2), $A^3(\pi_1)$ decomposes as 
        $$A^3(\pi_1) = (\pi_1 \otimes \nu) \boxplus (\pi_1 \otimes \nu^2)$$
        for some cubic character $\nu$ on $F$. Thus we have 
        $$L(s,A^3(\pi_1) \times \pi_2 \otimes \eta) = L(s,\pi_1 \times \pi_2 \otimes \eta\nu) \cdot L(s,\pi_1 \times \pi_2 \otimes \eta\nu^2)$$
        which has no Landau-Siegel by Lemma \ref{Rankin Selberg Landau-Siegel zero} (1).
        \item Assume $\pi_1$ is octahedral, i.e., $A^3(\pi_1)$ is cuspidal and monomial. Write $A^3(\pi_1) \cong A^3(\pi_1) \otimes \mu$ for some quadratic character $\mu$ on $F$. Since $\pi_2$ is non-dihedral, $\pi_2 \otimes \eta$ is not isomorphic to $(\pi_2 \otimes \eta) \otimes \mu$. Therefore $L(s,A^3(\pi_1) \times \pi_2 \otimes \eta)$ has no Landau-Siegel zero by Lemma \ref{new Rankin-Selberg Landau-Siegel zero} (2).
        \item Assume $\pi_1$ is not of solvable polyhedral type. In this case we go back to the triple product $L$-function
        $$L(s,\pi_1 \times \pi_2 \times \pi) = L(s,\pi_1 \times \pi_2^\prime \times \Ad(\pi_1)).$$
        Here we write $\pi_2^\prime = \pi_2 \otimes \eta$. We set 
        $$\Pi := (\pi_2^\prime \boxtimes \Ad(\pi_1)) \boxplus \ov{\pi_1} \boxplus (\ov{\pi_1} \boxtimes \Ad(\pi_2^\prime)).$$
        To test whether $\pi_2^\prime \boxtimes \Ad(\pi_1)$ and $\ov{\pi_1} \boxtimes \Ad(\pi_2^\prime)$ are isomorphic we compute
        \begin{align*}
            L(s,(\pi_2^\prime \boxtimes \Ad(\pi_1)) \times (\pi_1 \boxtimes \Ad(\pi_2^\prime))) &= L(s,(\pi_1 \boxtimes \Ad(\pi_1)) \times (\pi_2^\prime \boxtimes \Ad(\pi_2^\prime))) \\
            &= L(s,(\pi_1 \boxplus A^3(\pi_1)) \times (\pi_2^\prime \boxplus A^3(\pi_2^\prime))) \\
            &= L(s,\pi_1 \times \pi_2^\prime) \cdot L(s,A^3(\pi_1) \times A^3(\pi_2^\prime)) \\
            &\cdot L(s,\pi_1 \times A^3(\pi_2^\prime)) \cdot L(s,\pi_2^\prime \times A^3(\pi_1)),
        \end{align*}
        which can only have a simple pole at $s=1$ if $A^3(\pi_2^\prime) \cong \ov{A^3(\pi_1)}$, which would imply that $\pi_2$ is not of solvable polyhedral type. By the decomposition above we have 
        \begin{align*}
            L(s,\Pi \times \ov{\Pi}) &= L(s,\pi_1 \times \pi_2^\prime \times \Ad(\pi_1))^2 \cdot L(s,\ov{\pi_1} \times \ov{\pi_2^\prime} \times \Ad(\pi_1))^2 \\
            &\cdot L(s,\pi_1 \times A^3(\pi_2^\prime)) \cdot L(s,\ov{\pi_1} \times \ov{A^3(\pi_2^\prime)}) \\
            &\cdot L(s,A^3(\pi_1) \times A^3(\pi_2^\prime)) \cdot L(s,\ov{A^3(\pi_1)} \times \ov{A^3(\pi_2^\prime)}) \\
            &\cdot L(s,\pi_1 \times (\ov{\pi_1} \boxtimes \Ad(\pi_2^\prime)))^2 \\
            &\cdot L(s,(\pi_2^\prime \boxtimes \Ad(\pi_1)) \times (\ov{\pi_2^\prime} \boxtimes \Ad(\pi_1))) \cdot L(s,\pi_1 \times \ov{\pi_1}) \\
            &\cdot L(s,(\pi_1 \boxtimes \Ad(\pi_2^\prime)) \times (\ov{\pi_1} \boxtimes \Ad(\pi_2^\prime))).
        \end{align*}
        Thus by Lemma \ref{Landau-Siegel zero lemma}, $L(s,\pi_1 \times \pi_2 \times \pi)$ can only have Landau-Siegel zeros if $\pi_2$ is not of solvable polyhedral type, and that 
        $$A^3(\pi_2 \otimes \eta) \cong \ov{A^3(\pi_1)}.$$
        This proves Theorem \ref{Theorem B, refined version} (2).
    \end{enumerate}
\end{enumerate}

\subsection{Dihedral Type}
Assume $(\pi_1,\pi_2,\pi)$ is of dihedral type. Recall that this means $\pi_1$ or $\pi_2$, say $\pi_1$, is dihedral. Write $\pi_1 = I_K^F(\chi_1)$ for a character $\chi_1$ on some quadratic extension $K/F$. Then we have 
\begin{align*}
    L(s,\pi_1 \times \pi_2 \times \pi) &= L(s,(I_K^F(\chi_1) \boxtimes \pi_2) \times \pi) \\
    &= L(s,I_K^F(B_F^K(\pi_2) \otimes \chi_1) \times \pi) \\
    &= L(s,B_F^K(\pi_2) \times B_F^K(\pi) \otimes \chi_1).
\end{align*}
Let $\eta = \eta_{K/F}$ be the quadratic character on $F$ associated with $K/F$. Let $\theta = \theta_{K/F}$ be the non-trivial element in Gal$(K/F)$. Then $\pi$ is not isomorphic to $\pi \otimes \eta$. By Lemma \ref{base change and automporphic induction} we know that $B_F^K(\pi)$ is cuspidal. 
\begin{enumerate}
    \item Assume that $B_F^K(\pi_2)$ is not cuspidal. Then $\pi_2 = I_K^F(\chi_2)$ for some character $\chi_2$ on $F$. In this case we have 
    \begin{align*}
        L(s,\pi_1 \times \pi_2 \times \pi) &= L(s,(\chi_2 \boxplus \chi_2^\theta) \times B_F^K(\pi) \otimes \chi_1) \\
        &= L(s,B_F^K(\pi) \otimes \chi_1\chi_2) \cdot L(s,B_F^K(\pi) \otimes \chi_1\chi_2^\theta)
    \end{align*}
    which has no Landau-Siegel zero by Lemma \ref{standard L-function Landau-Siegel zero} (3).
    \item Assume that $B_F^K(\pi_2)$ is cuspidal, but is not isomorphic to $B_F^K(\pi_2) \otimes \chi_1^\theta\chi_1^{-1}$. Then by Lemma \ref{Rankin-Selberg product functorality} (1) we know that $\pi_1 \boxtimes \pi_2$ is cuspidal. We write 
    $$L(s,\pi_1 \times \pi_2 \times \pi) = L(s,(\pi_1 \boxtimes \pi_2) \times \pi).$$
    Note that $(\pi_1 \boxtimes \pi_2) \otimes \eta = (\pi_1 \otimes \eta) \boxtimes \pi_2 = \pi_1 \boxtimes \pi_2$, and $\pi$ is not isomorphic to $\pi \otimes \eta$. By Lemma \ref{new Rankin-Selberg Landau-Siegel zero} (2), $L(s,(\pi_1 \boxtimes \pi_2) \times \pi)$ has no Landau-Siegel zero.
    \item Assume that $B_F^K(\pi_2)$ is cuspidal, and is isomorphic to $B_F^K(\pi_2) \otimes \chi_1^\theta\chi_1^{-1}$. Let $L/K$ be the quadratic extension associated with $\chi_1^\theta\chi_1^{-1}$. Then $B_F^K(\pi_2) = I_L^K(\xi_2)$ for some character $\xi_2$ on $L$. In this case we have 
    \begin{align*}
        L(s,\pi_1 \times \pi_2 \times \pi) &= L(s,I_L^K(\xi_2) \times B_F^K(\pi) \otimes \chi_1) \\
        &= L(s,B_K^L(B_F^K(\pi) \otimes \chi_1) \otimes \xi_2)
    \end{align*}
    which has no Landau-Siegel zero by Lemma \ref{standard L-function Landau-Siegel zero} (3).
\end{enumerate}

\section*{Acknowledgment}
We would like to thank Prof. Wenzhi Luo for his suggestion on the topic and his valuable comments. We would also like to thank Prof. Jesse Thorner for helpful conversations.

\bibliographystyle{alpha}	
\bibliography{ref.bib}

\vspace{3mm}

\noindent Department of Mathematics, The Ohio State University, Columbus, OH 43210, USA.

\noindent E-mail address: {\tt zhao.3326@osu.edu}

\end{document}